\newtheoremstyle{repeat}{}{}{\itshape}{}{\bfseries}{.}{.5em}{#3, repeated}
\newtheorem{theo}{Theorem}[section]
\newtheorem{lemma}[theo]{Lemma}
\newtheorem{propo}[theo]{Proposition}
\newtheorem{defi}[theo]{Definition}
\newtheorem{coro}[theo]{Corollary}
\newtheorem{rem}[theo]{Remark}
\newtheorem{exam}[theo]{Example}
\newtheorem{exams}[theo]{Examples}
\newtheorem{fact}[theo]{Fact}
\newtheorem{question}[theo]{Question}
\theoremstyle{repeat}
\newtheorem*{repeated-theorem}{Repeat}
\newcommand\NSOP{\operatorname{NSOP}}
\newcommand\Set{\operatorname{\bf Set}}
\newcommand\Bil{\operatorname{\bf Bil}}
\newcommand\BinFunc{\operatorname{\bf BinFunc}}
\newcommand\TwoGraph{\operatorname{\bf TwoGraph}}
\newcommand\Pos{\operatorname{\bf Pos}}
\newcommand\Gra{\operatorname{\bf Gra}}
\newcommand\Grp{\operatorname{\bf Grp}}
\newcommand\Ins{\operatorname{\bf Ins}}
\newcommand\colim{\operatorname{colim}}
\newcommand\gtp{\operatorname{gtp}}
\newcommand\cc{\mathcal {C}}
\newcommand\ce{\mathcal {E}}
\newcommand\ck{\mathcal {K}}
\newcommand\cm{\mathcal {M}}
 \newbox\noforkbox \newdimen\forklinewidth
\noforkbox\hbox{\lower 2pt\box1\lower
2pt\box0\relax}
\def\unionstick{\mathop{\copy\noforkbox}\limits}
\newcommand{\nf}{\unionstick}
\newcommand{\ld}{\textup{ld}}
\newcommand{\sld}{\textup{sld}}
\newcommand{\isid}{\textup{isi-d}}
\newcommand{\isif}{\textup{isi-f}}
\newcommand{\slf}{\textup{slf}}
\date{1 July 2024}
\subjclass{18C35 (Primary), 03C45, 03C48, 03C52 (Secondary)}
\keywords{locally presentable category; locally multipresentable category; accessible category; abstract elementary category; stable independence; simple independence; NSOP$_1$-like independence}
\begin{document}
\title[Unstable independence from the categorical point of view]
{Unstable independence from the categorical point of view}
\author[M. Kamsma and J. Rosick\'{y}]
{M. Kamsma and J. Rosick\'{y}}
\thanks{The first author is supported by the EPSRC grant EP/X018997/1 and the second author is supported by the Grant Agency of the Czech Republic under the grant 22-02964S}
\address{
\newline M. Kamsma\newline
School of Mathematical Sciences\newline
Queen Mary University of London, Faculty of Science and Engineering\newline
327 Mile End Road, London E1 4NS, United Kingdom\newline
mark@markkamsma.nl\newline
\newline J. Rosick\'{y}\newline
Department of Mathematics and Statistics\newline
Masaryk University, Faculty of Sciences\newline
Kotl\'{a}\v{r}sk\'{a} 2, 611 37 Brno, Czech Republic\newline
rosicky@math.muni.cz
}

\begin{abstract}
We give a category-theoretic construction of simple and NSOP$_1$-like independence relations in locally finitely presentable categories, and in the more general locally finitely multipresentable categories. We do so by identifying properties of a class of monomorphisms $\cm$ such that the pullback squares consisting of morphisms in $\cm$ form the desired independence relation. This generalizes the category-theoretic construction of stable independence relations using effective unions or cellular squares by M. Lieberman, S. Vasey and the second author to the unstable setting.
\end{abstract} 
%\keywords{}
%\subjclass{}

\maketitle

\tableofcontents

\section{Introduction}
Stability theory is an important branch of model theory that was largely created by S.\ Shelah \cite{Sh}, and it aims to classify structures based on their logical complexity. Its central tool is that of independence relations. Examples of independence relations are linear independence in vector spaces and algebraic independence in fields. In the now-dominant anchor notation introduced by Makkai \cite{M}, this is rendered as a relation $A \nf_C^M B$ on quadruples of sets understood to mean that $A$ is independent from $B$ over $C$ in $M$. Shelah identified an important class of well-behaved theories, called \emph{stable} theories. He developed independence relations for stable theories through the notions of forking and dividing \cite{Sh}. Later, this work was generalised to the broader class of \emph{simple} theories \cite{KP}. More recently, it was found that \emph{NSOP$_1$} theories---the class of NSOP$_1$ theories properly contains the simple theories---admit a good independence relation, by relaxing the notions of forking and dividing to Kim-forking and Kim-dividing \cite{KaplanR}.

In this paper we concern ourselves with a category-theoretic approach to model-theoretic independence relations, through the framework of accessible categories. An accessible category is, intuitively, a category with all sufficiently directed co\-li\-mits and such that every object can be seen as a sufficiently directed colimit of \emph{small} objects where smallness is interpreted as a notion of size which make sense in an arbitrary category. Examples are categories of models of a first-order theory with elementary embeddings as morphisms or the much more general setting of AECs (Abstract Elementary Classes) with strong embeddings. Accessible categories having all colimits are called locally presentable. If ``small'' means \emph{finite} we get locally finitely presentable categories. All this can be found in \cite{MP,AR}. Doing model theory in accessible categories is in fact very close to the framework of AECs \cite{BR}. These frameworks allow for a different and more general approach to model theory, which has been used to obtain deep model-theoretic results (see e.g., \cite{ShAEC, GM, BGKV, HK, MR}).

Focusing on independence relations, recent developments \cite{LRV, LRV1, K, K1} allow for a category-theoretic treatment of them as a calculus of commuting squares. Even, canonicity theorems have been established. They state that in a given category there can be at most one nice enough independence relation (see e.g.\ Theorem \ref{canonicity}). This then recovers part of the classical \emph{stability hierarchy}, based on which kind of independence relation exists in a given category: in increasing generality we have stable, simple and NSOP$_1$-like independence relations, corresponding  to the similarly named classes of theories in classical stability theory.

Stable independence relations satisfy the uniqueness property, which roughly states that there is a unique way to complete a span of arrows $A \leftarrow C \to B$ to an independent square. This uniqueness is taken up to an appropriate equivalence relation, whose equivalence classes are called \emph{Galois types}. The distinguishing property between simple and stable independence is that the uniqueness property is replaced by the weaker \emph{$3$-amalgamation}, a higher-dimensional amalgamation axiom for independent squares. The difference between simple and NSOP$_1$-like independence is that the latter does not satisfy the base monotonicity property (whereas the former does).

In \cite{LRV, LRV1} a stable independence relation is constructed category-theoretically in locally finitely presentable categories. In \cite{LRV} this is done by considering effective unions. These are pullback squares of regular monomorphisms such that the induced morphism from the relevant pushout square is also a regular monomorphism. By replacing the class of regular monomorphisms with some class $\cm$ of monomorphisms, we get the notion of cellular squares, and \cite{LRV1} generalizes the above construction of stable independence using cellular squares. The paper \cite{LRV1} also relates stable independence to the homotopy-theoretic notion of cofibrant generation.

In this paper we construct NSOP$_1$-like and simple independence relations category-theoretically, similar to how \cite{LRV, LRV1} constructs stable independence relations. We do this by essentially replacing the uniqueness property by $3$-amalgamation. Since the uniqueness property comes directly from a pushout being `hidden' in the independent squares, we drop this requirement and work instead with a higher-dimensional requirement that we call \emph{cubic}. This results in pullback squares forming an NSOP$_1$-like independence relation. Note that we have also lost base monotonicity, because in the stable case that follows from uniqueness. We therefore also identify a further propery that corresponds to base monotonicity, so that property holds exactly when this NSOP$_1$-like independence relation is simple.

\textbf{Main results.}
We establish conditions on a class of monomorphisms $\cm$ in a category $\ck$ such that the category $\ck_\cm$ admits an NSOP$_1$-like or simple independence relation. Here $\ck_\cm$ is the category whose objects are those of $\ck$ and whose morphisms are those from $\cm$.
\begin{theo}
\label{nsop}
Suppose that $\ck$ is a locally finitely presentable category equipped with a cubic, continuous and accessible class $\cm$ of monomorphisms. Then $\ck_\cm$ has an $\NSOP_1$-like independence relation given by pullback squares.
\end{theo}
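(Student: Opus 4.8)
The plan is to take the independence relation $\nf$ on $\ck_\cm$ whose independent squares are exactly the pullback squares all of whose edges lie in $\cm$: declare $A \nf_C B$ inside $M$ to hold precisely when the commuting square with span $A \leftarrow C \to B$ and cospan $A \to M \leftarrow B$ is a pullback and each of its four edges is a morphism of $\cm$. With this definition the bookkeeping axioms come essentially for free. Invariance holds because the notion is stated purely diagrammatically, while monotonicity and normality follow from the standard pasting and cancellation lemmas for pullbacks together with the closure of $\cm$ under the relevant compositions. I would record these first, since they require no hypothesis on $\cm$ beyond its being a reasonable class of monomorphisms.

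Next I would treat symmetry and the existence/extension axioms. Symmetry is immediate, as a pullback square is symmetric under transposing its two spans and the edge condition is itself symmetric. For existence and extension --- completing a span $A \leftarrow C \to B$ in $\cm$ to an independent square, and extending a given independent configuration along a map of $\cm$ --- I would use that $\ck$ is locally finitely presentable, hence complete and cocomplete, to form the relevant pullbacks and pushouts, and then invoke the continuity and accessibility of $\cm$ to guarantee that the completing edges can be taken in $\cm$. The same two properties yield the accessibility of the independence relation itself: $\cm$ being accessible makes $\ck_\cm$ accessible, and continuity (closure under the appropriate filtered colimits) ensures that the class of independent squares is closed under directed colimits, so it forms an accessible subcategory of the square category. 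This is exactly what is needed to situate the relation within the framework of \cite{K, K1, LRV, LRV1}.

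The heart of the argument, and the step I expect to be the main obstacle, is \emph{$3$-amalgamation} (the independence theorem), which is precisely where the \emph{cubic} hypothesis enters and replaces the pushout/uniqueness mechanism of the stable case in \cite{LRV, LRV1}. Given three pairwise-compatible independent squares arranged around a common base, the task is to assemble them as three faces of a cube and then produce the remaining face as a pullback square in $\cm$ amalgamating the data. I would set up the $3$-dimensional diagram carefully, identify the partial cube determined by the hypotheses, and apply the cubic property to fill in the missing vertex together with its incident edges in $\cm$; the delicate part is verifying that each newly created face is again a pullback with edges in $\cm$, for which I would lean on the pullback pasting lemma applied face-by-face and on the defining closure properties of a cubic class. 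I anticipate that checking compatibility of the amalgam along all three original squares simultaneously --- rather than one at a time --- is where the real work lies.

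Finally I would observe what is deliberately \emph{not} proved, namely base monotonicity. In the stable construction base monotonicity is a consequence of uniqueness, but having traded uniqueness for the weaker cubic/$3$-amalgamation condition we no longer obtain it; this is exactly the feature that keeps the resulting relation at the $\NSOP_1$-like level rather than the simple level, matching the informal description in the introduction. Isolating the additional property on $\cm$ that would restore base monotonicity is what a companion statement about simple independence requires and lies outside the present theorem. Collecting the verified axioms then shows that pullback squares in $\cm$ constitute an $\NSOP_1$-like independence relation on $\ck_\cm$, as claimed.
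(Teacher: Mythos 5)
Your overall architecture matches the paper's: pullback squares of $\cm$-morphisms, basic axioms from pullback pasting/cancellation, $3$-amalgamation from the cubic hypothesis, plus union and accessibility. But there are two genuine gaps. First, and most seriously, your accessibility argument does not work as stated: you claim that since $\ck_\cm$ is accessible and the class of independent squares is closed under directed colimits, $\ck_{\nf}$ ``forms an accessible subcategory of the square category.'' A full subcategory of an accessible category closed under directed colimits need not be accessible, and this is exactly where the bulk of the paper's proof lives. The paper first establishes a uniform presentability bound --- a regular cardinal $\mu$ (chosen with $\mu \unrhd \lambda$) such that the domain of any $\cm$-morphism into a $\mu$-presentable object is again $\mu$-presentable, proved by pulling back a $\mu$-small $\lambda$-directed decomposition of the codomain and using that pullbacks commute with directed colimits. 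It then writes an arbitrary $f\colon K \to L$ in $\cm$ as a $\mu$-directed colimit, \emph{in $\ck_{\nf}$}, of morphisms between $\mu$-presentable objects, with all connecting squares pullbacks, and checks these are $\mu$-presentable as objects of $\ck_{\nf}$. None of this is automatic, and your proposal gives no substitute for it.

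Second, you attribute the existence property (completing a span in $\cm$ to an independent square) to ``the continuity and accessibility of $\cm$,'' but those hypotheses are irrelevant there. Existence comes from the \emph{squared} axioms hidden inside ``cubic'': (A2) says the pushout of two $\cm$-morphisms has its cocone in $\cm$, and (A4) says that pushout square is a pullback; together these complete any span to an independent square. Without invoking (A2) and (A4) the step fails. A smaller remark in the other direction: you describe $3$-amalgamation as ``the main obstacle'' requiring a delicate face-by-face verification, but the cubic condition is formulated precisely so that the required conclusion --- the colimit cocone lies in $\cm$ and the diagonal square $A,N,M,N_3$ is a pullback --- is its literal content; only that one diagonal square needs to be independent, not every new face, so this step is immediate (Proposition \ref{cubic1} in the paper).
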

\begin{theo}
\label{simple}
Suppose that $\ck$ is a locally finitely presentable category equipped with a strongly cubic and continuous class $\cm$ of monomorphisms. Then $\ck_\cm$ has a simple independence relation given by pullback squares.
\end{theo}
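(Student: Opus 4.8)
The plan is to prove Theorem \ref{simple} by reducing it to Theorem \ref{nsop} together with one extra verification: that the strongly cubic hypothesis yields base monotonicity, which is precisely the property distinguishing simple from NSOP$_1$-like independence. Since the paper explicitly says that a strongly cubic class is (in particular) cubic and that an NSOP$_1$-like independence becomes simple exactly when base monotonicity holds, the architecture of the argument should be: first invoke Theorem \ref{nsop} to get an NSOP$_1$-like independence relation on $\ck_\cm$ given by pullback squares, then upgrade it to a simple one by checking base monotonicity.

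First I would unpack the definition of the independence relation $\nf$ in question: $A \nf^M_C B$ holds when the relevant square of $\cm$-morphisms, with $C$ as the base, $A$ and $B$ as the two legs, and $M$ as the ambient object, is a pullback in $\ck$. The hypotheses of Theorem \ref{nsop} that carry over are \emph{continuous} and \emph{accessible}; the difference is that \emph{cubic} has been strengthened to \emph{strongly cubic}, and I expect the strongly cubic condition to already entail that $\cm$ is accessible (or that accessibility is not needed once strong cubicness is assumed), so that the three hypotheses of Theorem \ref{nsop} are all met. Thus all the axioms established there for an NSOP$_1$-like relation — existence, symmetry, monotonicity, transitivity, the local character / accessibility conditions, and the $3$-amalgamation property replacing uniqueness — transfer verbatim.

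The remaining and genuinely new work is base monotonicity: if $A \nf^M_C B$ and $C \subseteq C' \subseteq B$ (with all inclusions in $\cm$), then $A \nf^M_{C'} B$. I would prove this as a pullback-pasting statement. Given that the outer square over base $C$ is a pullback and $C'$ sits between $C$ and $B$, one factors the square through $C'$ and must show the upper square (over $C'$) is again a pullback. This is exactly where the strongly cubic hypothesis should be used: the strengthening over plain cubic ought to guarantee that the canonical comparison morphisms arising from this factorization are again in $\cm$ and that the relevant face is a pullback, by a diagram-chase that pastes cubes. The key lemma to isolate is that strong cubicness is stable under the base-change from $C$ to an intermediate $C'$ lying inside one of the legs, so that the defining diagrammatic property is inherited by the sub-configuration.

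The step I expect to be the main obstacle is precisely this base-monotonicity verification, because base monotonicity is notoriously the property that fails in the NSOP$_1$ setting and is recovered only in the simple setting; diagrammatically it requires showing that restricting the base to an object $C'$ between $C$ and $B$ preserves the pullback/cubic structure, rather than merely the existence of \emph{some} independent completion. The delicate point is controlling the interaction between the factorization of $B \to M$ through $C'$ and the cube-filling property guaranteed by strong cubicness — one must verify that the morphisms produced by strong cubicness restrict compatibly, and that no new non-$\cm$ morphisms are introduced. I would handle this by setting up the three-dimensional commutative diagram explicitly, applying strong cubicness to the appropriate face, and then using the pullback-pasting lemma to conclude, confirming finally that the resulting relation meets the full definition of simple independence.
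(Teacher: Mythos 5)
Your overall architecture matches the paper's: Theorem \ref{simple} is indeed proved by combining Theorem \ref{nsop} with a verification of base monotonicity, and your guess that accessibility comes for free is correct --- though it is not automatic from ``strong cubicness'' as a diagrammatic condition but from the fact that strongly cubic entails strongly coherently amalgamable, i.e.\ that $\cm$ is the right class of a factorization system $(\ce,\cm)$ with $\ce$ epimorphisms; then Proposition \ref{accessibility-of-continuous-factorization} applies because locally presentable categories have pushouts and are therefore co-wellpowered. You flag this step but do not supply the argument, and the argument genuinely needs the factorization system, not the cube condition.

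The more serious gap is in your plan for base monotonicity. First, your formulation ``$A \nf^M_{C'} B$ for $C \subseteq C' \subseteq B$, proved by factoring the square through $C'$ and showing the upper square is a pullback'' does not typecheck in the commuting-squares framework: there is no morphism $C' \to A$, so there is no ``square over base $C'$'' to test for being a pullback until you have \emph{constructed} a new object $A'$ receiving both $A$ and $C'$ (this is exactly why Definition \ref{independence-relation} states base monotonicity with dashed arrows $A \to A' \to N$ and $B \to A'$). Second, the input that produces $A'$ and makes the new square a pullback is not a cube-filling or cube-pasting property at all: the extra strength of ``strongly cubic'' over ``cubic'' is the two-dimensional \emph{strongly squared} condition of Definition \ref{strongly-def}, phrased in terms of $\ce$-pushouts. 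The paper's Proposition \ref{strongly} shows this condition is \emph{equivalent} to base monotonicity, and the forward direction runs as follows: form the pushout $P$ of $A \leftarrow C \to C'$, take the $(\ce,\cm)$-factorization $P \to A' \to M$ of the induced comparison map, observe that the resulting left square is an $\ce$-pushout and the outer rectangle is a pullback (left-cancellability of $\cm$ ensures all edges are in $\cm$), and then strongly squared says precisely that the right square $(C', A', B, M)$ is a pullback. Without this pushout-then-factor construction, and without identifying the strongly squared axiom as the engine, your ``diagram-chase that pastes cubes'' has nothing to chase, so the key step of the proof is missing.
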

We then generalize the above theorems to locally multipresentable categories, which are accessible categories that have all connected limits.
\begin{theo}
\label{lmp-nsop}
Suppose that $\ck$ is a locally finitely multipresentable category equipped with a multicubic, continuous and accessible class $\cm$ of monomorphisms. Then $\ck_\cm$ has an $\NSOP_1$-like independence relation given by pullback squares.
\end{theo}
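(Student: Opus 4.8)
The plan is to mirror the proof of Theorem~\ref{nsop}, replacing the pushouts used in the locally finitely presentable argument by the multipushouts available in a locally finitely multipresentable $\ck$. Such a $\ck$ is accessible and has all connected limits, so in particular all pullbacks exist; hence the collection of pullback squares whose four edges lie in $\cm$ is a well-defined class of commuting squares in $\ck_\cm$, and I take this to be the candidate independence relation. What this setting lacks, compared with the presentable case, are pushouts: being a connected colimit, a pushout need not exist, and it is exactly here that the multicolimit structure of $\ck$ must be invoked. The task is then to verify the axioms of an $\NSOP_1$-like independence relation for this candidate.

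First I would dispatch the structural axioms — invariance under isomorphism, monotonicity, normality, symmetry, existence, and transitivity. These follow formally from the pasting and cancellation lemmas for pullbacks together with the standing assumptions that $\cm$ consists of monomorphisms, is closed under composition, and is continuous; since pullbacks are honest limits, the arguments are insensitive to the passage from the presentable to the multipresentable setting. Symmetry is immediate because the pullback condition is symmetric in the two legs. Existence — completing a span $A \leftarrow C \to B$ of $\cm$-maps to a pullback square in $\cm$ — and the accessibility/local-character requirement are obtained, as in Theorem~\ref{nsop}, from $\ck$ being accessible and $\cm$ being an accessible and continuous class; the only change is that the cocone one starts from is now supplied by a multipushout rather than a pushout.

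The heart of the argument, and the sole place where the hypothesis \emph{multicubic} is essential, is $3$-amalgamation, the axiom that replaces uniqueness and distinguishes the $\NSOP_1$-like case. In the presentable proof, cubicity fills a suitable system of independent squares to a commuting cube by realizing the new edge as the comparison map out of a pushout, forcing pullback-independence of the outer faces. In the multipresentable setting this single pushout becomes a multipushout: a family of cocones through exactly one of which any given cocone factors essentially uniquely. The multicubic hypothesis is designed precisely so that one component of this family supplies a filling with all edges in $\cm$. I expect the main obstacle to be the bookkeeping of \emph{which} component to select, and the proof that the locally chosen components cohere — that the filling produced for one face is compatible with the data already present and that the resulting faces are again pullback squares of $\cm$-morphisms. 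Since multicolimit components are pinned down only by their factorization property and not canonically, one must show that the factorizations forced by the given independence data single out a unique admissible component, so that the $3$-amalgamation square is produced unambiguously.

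Finally, the extension property is recovered in the usual way by combining existence with the $3$-amalgamation just established, which completes the verification that pullback squares form an $\NSOP_1$-like independence relation on $\ck_\cm$. Base monotonicity is not checked, and indeed should fail in general: its absence is exactly what separates this result from the simple case of Theorem~\ref{simple}, whose multipresentable analogue would require a \emph{strongly} multicubic hypothesis rather than the multicubic one assumed here.
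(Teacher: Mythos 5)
Your overall strategy is the paper's: the published proof of Theorem \ref{lmp-nsop} consists precisely of the instruction ``follow the proof of Theorem \ref{nsop}'', with one explicit addendum, so your plan of mirroring that argument with multipushouts in place of pushouts is the intended one. Two remarks, however. First, the obstacle you anticipate in the $3$-amalgamation step --- having to select coherently among the components of the multicolimit and to show that the independence data single out a unique admissible component --- is not actually there. The multicubic hypothesis (Definition \ref{multi-coherently-amalgamable-squared}) states outright that the multicolimit of the horn \emph{contains} a cocone lying entirely in $\cm$ whose diagonal square is a pullback, and $3$-amalgamation asks only for the \emph{existence} of such a filling, not for any canonicity; one simply takes that cocone, exactly as Proposition \ref{cubic1} takes the colimit in the cubic case. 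No uniqueness or compatibility of components needs to be proved.

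Second, you describe the union and accessibility verifications as ``insensitive to the passage from the presentable to the multipresentable setting'', but this is the one place where something genuinely must be re-justified, and it is the only point the paper's proof singles out. Both arguments in the proof of Theorem \ref{nsop} rest on the fact that pullbacks commute with directed colimits; in the locally finitely presentable case this is \cite[Proposition 1.59]{AR} (pullbacks are finite limits), whereas in the locally finitely multipresentable case one must instead invoke \cite[Theorem 4.30]{AR}. The fact does hold, so there is no error in your conclusion, but a complete write-up should cite it rather than treat the step as formal. (Minor point: ``normality'' and ``extension'' are not among the axioms of Definition \ref{independence-relation}; existence in this paper already means completing an arbitrary span to an independent square.)
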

\begin{theo}
\label{lmp-simple}
Suppose that $\ck$ is a locally finitely multipresentable category equipped with a strongly multicubic, continuous and accessible class $\cm$ of monomorphisms. Then $\ck_\cm$ has an simple independence relation given by pullback squares.
\end{theo}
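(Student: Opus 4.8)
The plan is to mirror the derivation of Theorem~\ref{simple} from Theorem~\ref{nsop}, but one level up: I would obtain the $\NSOP_1$-like relation from Theorem~\ref{lmp-nsop} and then promote it to a simple one by establishing base monotonicity. First I would check that the present hypotheses imply those of Theorem~\ref{lmp-nsop}. A strongly multicubic class is in particular multicubic, and continuity is assumed outright, so the only point needing argument is that $\cm$ is accessible. This should follow from strong multicubicity together with continuity, by the multipresentable analogue of the implication already used to pass from Theorem~\ref{nsop} to Theorem~\ref{simple}; I would isolate it as a lemma phrased uniformly for both settings. Granting it, Theorem~\ref{lmp-nsop} supplies an $\NSOP_1$-like independence relation on $\ck_\cm$ given by pullback squares, and it remains only to verify the single extra axiom distinguishing simple from $\NSOP_1$-like independence.

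That axiom is base monotonicity, and the natural approach is to extract it directly from the strong form of the cubic condition, exactly as in the locally finitely presentable case. Concretely, starting from an independent (pullback) square witnessing $A \nf_C B$ and a factorization of the base leg through an intermediate object $C \to C' \to B$, I would form the pullback of the configuration over $C'$ and use the strong multicubic condition to recognise the resulting square as again independent, yielding $A \nf_{C'} B$. The content here is a compatibility between the multicubic cells living over $C$ and those living over $C'$: the strong version of the condition is precisely what should force these cells to cohere, so that an independent witness over the smaller base restricts to one over the larger base.

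To keep the transfer from the presentable to the multipresentable setting under control, I would organise the square manipulations inside coslice categories: for a fixed object $C$ the coslice $C/\ck$ has its connected limits created from $\ck$, while the wide pullbacks of cones out of $C$ furnish the missing products and the terminal object $\id_C$, so $C/\ck$ is locally presentable and the induced class of monomorphisms inherits the relevant properties. The configurations appearing in each independence axiom share a common base and can therefore be tested after passing to the appropriate coslice, where the already-proved Theorem~\ref{simple} and its supporting lemmas apply. The main obstacle I anticipate is not any single axiom but the coherence across two different bases: I must ensure that strong multicubicity in $\ck$ restricts to strong cubicity in each coslice, that the multilimits witnessing existence and amalgamation correspond to genuine limits after coslicing, and that the presentability rank of a non-finitely-presentable base $C$ (which forces $C/\ck$ to be only locally $\mu$-presentable rather than locally finitely presentable) does not disrupt the cell calculus. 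Reconciling this varying-base coherence with the strong condition is where I expect the real work to lie.
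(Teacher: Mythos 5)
Your high-level decomposition matches the paper's: Theorem~\ref{lmp-simple} is obtained by combining Theorem~\ref{lmp-nsop} with the statement that strong multisquaredness yields base monotonicity (Proposition~\ref{lmp-strongly}, the multipresentable analogue of Proposition~\ref{strongly}). Two of your worries are non-issues, though. Accessibility of $\cm$ is an explicit hypothesis of Theorem~\ref{lmp-simple} (unlike in Theorem~\ref{simple}), precisely because the route through Proposition~\ref{accessibility-of-continuous-factorization} needs co-wellpoweredness, which comes for free from pushouts in the locally presentable case but not in the multipresentable one; so there is nothing to prove there, and the ``uniform lemma'' you propose would actually require an extra hypothesis. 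And the base-monotonicity step is not obtained by ``forming the pullback of the configuration over $C'$'': the actual mechanism is to take the appropriate instance of the multipushout of $A \leftarrow C \to C'$ (namely the one admitting a comparison map to the ambient object), apply the $(\ce,\cm)$-factorization to that comparison map, and then invoke the strongly multisquared condition on the resulting $\ce$-multipushout square whose outer rectangle is a pullback. Your sketch does not identify this factorization step, which is where the factorization system --- the whole reason ``strongly'' is in the hypothesis --- enters.

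The genuine gap is the coslice reduction, which is where you say the real work lies and which does not go through. For a locally multipresentable $\ck$ the coslice $C/\ck$ is in general only locally multipresentable again, not locally presentable: $\id_C$ is the \emph{initial} object of $C/\ck$, not the terminal one, and wide pullbacks of cones out of $C$ do not produce products in $C/\ck$ (a product of $C \to X_i$ in the coslice is computed from the product $\prod_i X_i$ in $\ck$, which need not exist; e.g.\ the coslice of the category of fields under $\mathbb{Q}$ has no terminal object). So you cannot transfer the problem into a setting where Theorem~\ref{simple} applies, and even if you could, base monotonicity intrinsically compares independence over two different bases $C$ and $C'$, so no single coslice sees the statement. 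The direct adaptation of Proposition~\ref{strongly} --- replacing each pushout by the instance of the multipushout that maps to the object you need, as in Proposition~\ref{lmp-strongly} --- is both necessary and sufficient, and avoids the coslice machinery entirely.
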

The canonicity theorems (Theorem \ref{canonicity}) for categorical independence relations immediately apply to simple independence, but for NSOP$_1$-like independence we need a technical condition called \emph{the existence axiom for isi-forking}. We prove exactly that in Corollary \ref{existence-axiom-presentable}. In fact, we prove more generally that the existence of a weakly stable independence relation is enough (Theorem \ref{existence-axiom}). We can thus summarize the canonicity of the above independence relations as follows.
\begin{coro}\label{canonicity-in-main-thms}
The independence relations in Theorems \ref{nsop}, \ref{simple} and \ref{lmp-simple} are canonical in the following sense. Given a category $\ck$ and class of monomorphisms $\cm$ satisfying the conditions in one of these theorems, then any NSOP$_1$-like independence relation on $\ck_\cm$ is given by pullback squares.
\end{coro}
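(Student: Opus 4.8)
The plan is to deduce this directly from the existence statements already obtained (Theorems \ref{nsop}, \ref{simple} and \ref{lmp-simple}) together with the general canonicity theorem (Theorem \ref{canonicity}), using Corollary \ref{existence-axiom-presentable} to supply the one hypothesis that is not automatic in the NSOP$_1$-like case. So the corollary is genuinely a corollary: no new construction is needed, only the correct assembly of the cited results.

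First I would dispose of the two simple cases. By Theorem \ref{simple} (respectively Theorem \ref{lmp-simple}) the pullback squares form a \emph{simple} independence relation on $\ck_\cm$. A simple independence relation is in particular NSOP$_1$-like and, as it satisfies base monotonicity, the canonicity theorem applies to it without any further hypothesis, exactly as remarked in the discussion following the theorem statements. Theorem \ref{canonicity} then forces every NSOP$_1$-like independence relation on $\ck_\cm$ to coincide with this pullback-square relation.

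The case of Theorem \ref{nsop} is where the extra ingredient is needed. Here the pullback-square relation is only guaranteed to be NSOP$_1$-like, and to invoke canonicity I must first verify the existence axiom for isi-forking. Since $\ck$ is locally finitely presentable, this is precisely what Corollary \ref{existence-axiom-presentable} provides---ultimately because such a $\ck$ carries a weakly stable independence relation, to which Theorem \ref{existence-axiom} applies. With the existence axiom in hand, Theorem \ref{canonicity} once more identifies every NSOP$_1$-like independence relation on $\ck_\cm$ with the pullback-square one.

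The step I expect to be the main obstacle is not conceptual but one of bookkeeping: I must check that the pullback-square relation produced by each theorem really does verify the full list of axioms demanded as \emph{input} by Theorem \ref{canonicity} (in particular the existence axiom for isi-forking in the NSOP$_1$-like setting), rather than merely the headline properties recorded in the theorem statements. Once that verification is in place the corollary follows immediately, and this also explains why Theorem \ref{lmp-nsop} is deliberately omitted from the statement: in the purely multipresentable NSOP$_1$-like setting the existence axiom for isi-forking is not delivered by Corollary \ref{existence-axiom-presentable}, so the canonicity argument does not go through.
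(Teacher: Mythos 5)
Your proposal is correct and follows essentially the same route as the paper: apply Theorem \ref{canonicity} directly in the simple cases (Theorems \ref{simple} and \ref{lmp-simple}), and for Theorem \ref{nsop} first secure the existence axiom for isi-forking via Corollary \ref{existence-axiom-presentable} before invoking canonicity for NSOP$_1$-like relations. Your closing remark about why Theorem \ref{lmp-nsop} is excluded also matches the paper's own discussion.
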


\section{AECats and independence relations}
We recall a simplified version of the framework of \emph{AECats} (\emph{Abstract Elementary Categories}) from \cite{K}, where these are defined as a pair of categories $(\cc, \cm)$. We will only be interested in the case $\cc = \cm$, hence the simplification.
\begin{defi}
{
\em
An \emph{AECat} is an accessible category with directed colimits and where all morphisms are monomorphisms.
}
\end{defi}
Throughout we will only be interested in AECats with the amalgamation property.
\begin{defi}
{
\em
We say that a category has the \emph{amalgamation property} or \emph{AP} if any span of morphisms can be completed to a commuting square.
}
\end{defi}
Both \cite{K} and \cite{K1} deal with independence relations in AECats, where the former is only about simple independence and the latter introduces NSOP$_1$-like independence and deals with both. We will base ourselves on the terminology in \cite{K1}. The approach in both papers is to define an independence relation as a ternary relation on subobjects. Another approach is taken in \cite{LRV, LRV1} where an independence relation is seen as a class of commuting squares. The two approaches can be translated into one another. The former has the advantage of an easier to use independence calculus, whereas the latter has the advantage of admitting a more categorical treatment. We will therefore take the commuting squares perspective, and so we translate the relevant definitions into that perspective. We first define and discuss the various properties for an independence relation that is based on commuting squares, and then we briefly discuss how to compare this to the subobjects version (Remark \ref{subobj-vs-commuting-squares}).
\begin{defi}
\label{independence-relation}
{
\em
An independence relation $\nf$ on a category $\ck$ (usually an AECat with AP) is a class of commuting squares. If a commuting square like below is in the relation, we call it \emph{independent}. Often, we leave the morphisms involved unlabelled and we write $A \nf_C^M B$ if the square below is independent. Furthermore, we require the relation to be such that in the commuting diagram below we have $A \nf_C^M B$ iff $A \nf_C^N B$.
\[
\xymatrix@=1pc{
  A \ar[r] & M \ar[r] & N \\
  C \ar[r] \ar[u] & B \ar[u] &
}
\]
We call an independence relation \emph{basic} if it satisfies the following properties.
\begin{description}
\item[Invariance] Given two isomorphic commuting squares like below, the one square is independent iff the other square is independent.
\[
\xymatrix@=1pc{
  A' \ar[rrr] \ar@{=}[dr]|\cong & & & M' \\
  & A \ar[r] & M \ar@{=}[ur]|\cong & \\
  & C \ar[u] \ar[r] & B\ar@{=}[dr]|\cong \ar[u] & \\
  C' \ar[uuu]\ar@{=}[ur]|\cong \ar[rrr] & & & B' \ar[uuu]
}
\]
\item[Monotonicity] In the commuting diagram below $A \nf_C^M B$ implies $A \nf_C^M B'$.
\[
\xymatrix@=1pc{
  A \ar[rr] & & M \\
  C \ar[r] \ar[u] & B' \ar[r] & B \ar[u]
}
\]
\item[Transitivity] Independent squares can be composed, so in the commuting diagram below we have that if the two squares are independent then the outer rectangle is independent.
\[
\xymatrix@=1pc{
  A \ar[r] & M \ar[r] & N \\
  C \ar[r] \ar[u] & B \ar[r] \ar[u] & D \ar[u]
}
\]
\item[Symmetry] We have $A \nf_C^M B$ iff $B \nf_C^M A$.
\item[Existence] Any span can be completed to an independent square.
\end{description}
We also define the following further properties.
\begin{description}
\item[Base Monotonicity] Given a commuting diagram consisting of the solid arrows below such that $A \nf_C^M D$, we can find the dashed arrows such that everything commutes and $A' \nf_B^N D$.
\[
\xymatrix@=2pc{
  & A' \ar@{-->}[r] & N \\
  A \ar@{-->}[ur] \ar[rr] & & M \ar@{-->}[u] \\
  C \ar[r] \ar[u] & B \ar[r] \ar@{-->}[uu] & D \ar[u]
}
\]
\item[Uniqueness] If in a commuting diagram consisting of the solid arrows below we have that if both $A \nf_C^M B$ and $A \nf_C^{M'} B$ then we can find the dashed arrows making everything commute.
\[
\xymatrix@=2pc{
  & M \ar@{-->}[r] & N \\
  A \ar[rr] \ar[ur] & & M' \ar@{-->}[u] \\
  C \ar[u] \ar[r] & B \ar[uu] \ar[ur] &
}
\]
\item[$3$-amalgamation] Given a commuting diagram consisting of the solid arrows below (we call this a \emph{horn}) with every square independent, we can find the dashed arrows such that everything commutes and $A \nf_C^N N_3$.
\[
\xymatrix@=2pc{
        & N_2 \ar @{-->}[rr] & & N\\
        A \ar[rr] \ar[ur] &
        & N_1 \ar @{-->}[ur] & \\
        & C \ar'[u][uu] \ar'[r][rr] & & N_3 \ar@{-->} [uu] & \\
        M \ar[ur]\ar[rr]\ar[uu] & & B \ar[ur]\ar[uu] &  
      }
\]
\end{description}
}
\end{defi}
Observe that an independence relation with the existence is closed
under $\sim$ (see \cite[Definition 3.4]{LRV}).

Given an independence relation $\nf$ with existence we have by \cite[Lemma 3.12]{LRV} that any commuting square
\[
\xymatrix@=1pc{
  B \ar[r] & D \\
  C \ar [u]^g \ar[r]_f & B \ar[u]
}
\]
with one of $f$ or $g$ an isomorphism is independent. In particular, if $f$ is an identity morphism the square is independent. Following \cite[Definition 3.16]{LRV} it thus makes sense to make the following definition.
\begin{defi}
{
\em
Given a category $\ck$ we write $\ck^2$ for the category that has as objects morphisms from $\ck$ and as morphisms the commuting squares in $\ck$. For an independence relation $\nf$ on $\ck$ that satisfies existence and transitivity we then define $\ck_{\nf}$ for the subcategory of $\ck^2$ where the morphisms are independent squares.
}
\end{defi}
We define two more properties for an independence relation $\nf$, which are properties of $\ck_{\nf}$ so we need to assume existence and transitivity.
\begin{defi}
\label{independence-relation-accessible-union}
{
\em
Let $\nf$ be an independence relation on $\ck$, satisfying existence and transitivity. We define the following properties.
\begin{description}
\item[Accessible] The category $\ck_{\nf}$ is accessible.
\item[Union] The category $\ck_{\nf}$ has directed colimits and these are preserved by the inclusion functor $\ck_{\nf} \hookrightarrow \ck^2$.
\end{description}
}
\end{defi}
We are now ready to translate the definition of the hierarchy of independence relations from \cite{K, K1} into our terminology. Once again, we refer to Remark \ref{subobj-vs-commuting-squares} for a comparison with the approach in \cite{K, K1}.
\begin{defi}
\label{independence-hierarchy}
{
\em
Let $\nf$ be a basic independence relation on $\ck$. We call $\nf$
\begin{itemize}
\item  a \emph{stable independence relation} if it is accessible and also satisfies union, $3$-amalgamation, base monotonicity and uniqueness.
\item  a \emph{simple independence relation} if it is accessible and also satisfies union, $3$-amalgamation and base monotonicity.
\item  an \emph{NSOP$_1$-like independence relation} if it is accessible and also satisfies union and $3$-amalgamation.
\end{itemize}
}
\end{defi}
We recall the canonicity theorems for the various notions of independence. For canonicity of NSOP$_1$-like independence we need the existence axiom for isi-forking, which we recall and deal with in Section \ref{sec:existence-axiom}.
\begin{theo}[{\cite[Theorems 1.1, 1.2, Corollary 1.3]{K1}}]
\label{canonicity}
Let $\ck$ be an AECat with AP.
\begin{enumerate}
\item There can be at most one simple independence relation on $\ck$.
\item If $\ck$ furthermore satisfies the existence axiom for isi-forking then there can be at most one NSOP$_1$-like independence relation on $\ck$.
\end{enumerate}
In particular, if there is a simple independence relation $\nf$ on $\ck$ then every NSOP$_1$-like independence relation on $\ck$ must coincide with $\nf$.
\end{theo}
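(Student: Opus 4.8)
The plan is to prove each uniqueness statement by showing that \emph{any} independence relation of the prescribed type coincides with one fixed, intrinsically defined reference relation, namely isi-nonforking, so that two candidate relations are compared through a common third relation rather than directly against each other; for an ``at most one'' statement this suffices, and I do not need the reference relation itself to be basic. This is the categorical analogue of the Kim--Pillay strategy, and it is cleanest in the subobject/Galois-type reformulation of Remark \ref{subobj-vs-commuting-squares}, where independence becomes a ternary relation on Galois types and isi-forking (recalled in Section \ref{sec:existence-axiom}) is defined purely from $\ck$ and AP, with no reference to any chosen $\nf$.

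First I would record the easy containment: every basic NSOP$_1$-like relation $\nf$ is contained in isi-nonforking. Given an independent square $A \nf_C^M B$, I would use \textbf{existence} and \textbf{transitivity} to build arbitrarily long $\nf$-independent sequences over $C$ in the Galois type of $B$, and then use \textbf{symmetry} together with the \textbf{accessibility} and \textbf{union} axioms to extract a $C$-indiscernible witness that lives inside $\ck$ and respects its directed-colimit structure. Such a witness shows that the relevant type does not isi-fork over $C$, so every independent square lies in isi-nonforking.

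The substance is the reverse containment. Assuming the \textbf{existence axiom for isi-forking}, I would start from a non-isi-forking datum: the axiom guarantees a realization of a suitable non-forking extension, \textbf{$3$-amalgamation} then amalgamates an $\nf$-independent copy of the configuration with that realization, and \textbf{symmetry} and \textbf{monotonicity} collapse the amalgam back onto the original span, exhibiting it as $\nf$-independent. Together the two containments pin $\nf$ down as isi-nonforking and give part (2). For part (1) the \textbf{base monotonicity} present in the simple case makes the existence axiom for isi-forking hold automatically, so no extra hypothesis is required; and the ``in particular'' clause follows because a simple relation is in particular NSOP$_1$-like and forces the existence axiom, whence part (2) identifies every NSOP$_1$-like relation on $\ck$ with it.

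The hard part will be this reverse containment. The essential difficulty is that \textbf{$3$-amalgamation} only amalgamates configurations that are \emph{already} independent, so the purely combinatorial hypothesis ``does not isi-fork'' must first be upgraded to a genuine $\nf$-independent configuration before the independence theorem can be brought to bear --- and that upgrade is exactly what the existence axiom for isi-forking supplies. This also explains the asymmetry between the two parts: base monotonicity furnishes the missing ingredient in the simple case, whereas for bare NSOP$_1$-like independence the existence axiom must be imposed as a hypothesis. A secondary but genuine technical burden is to guarantee, through \textbf{accessibility} and \textbf{union}, that all the long indiscernible sequences produced along the way actually live in $\ck$ and remain compatible with its presentation as directed colimits of finitely presentable objects.
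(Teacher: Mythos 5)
First, a point of comparison: the paper does not prove Theorem \ref{canonicity} at all --- it is imported wholesale from \cite[Theorems 1.1, 1.2, Corollary 1.3]{K1}, and the only original content surrounding it is the translation between the subobject formulation of \cite{K,K1} and the commuting-squares formulation used here (Remark \ref{subobj-vs-commuting-squares}). So there is no internal proof to measure you against; your attempt has to be judged against the argument in \cite{K1}. Your overall strategy is the right one and is indeed the one used there: compare any candidate relation with the intrinsically defined isi-dividing/isi-forking independence, prove containment in both directions, and obtain part (1) and the ``in particular'' clause by observing that the existence of a simple independence relation already forces the existence axiom for isi-forking --- the same phenomenon the paper isolates in Theorem \ref{existence-axiom}.

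There is, however, a concrete gap in your ``easy'' containment. You propose to build long $\nf$-independent sequences and then ``extract a $C$-indiscernible witness'' using accessibility and union. In a general AECat one cannot extract indiscernible sequences: there is no Ramsey or Erd\H{o}s--Rado argument available for Galois types without substantial additional hypotheses, and this impossibility is precisely the reason \cite{K1} works with \emph{isi}-dividing rather than dividing --- its witnessing sequences (isi-sequences) are built by transfinite induction from AP and directed colimits alone and require no extraction. The correct argument for this containment produces no indiscernible at all: one is \emph{handed} an arbitrary sufficiently long isi-sequence in the Galois type of $B$ over $C$ and must realize all the translated types of $A$ simultaneously, by transfinite induction along the sequence using existence, transitivity, symmetry and union. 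As your plan stands, this step stalls. Two smaller imprecisions: the reverse containment as sketched omits the local-character/chain argument (via accessibility) needed to descend to a small base before $3$-amalgamation can be applied; and attributing the automatic validity of the existence axiom in part (1) to base monotonicity alone is not quite right --- what does the work is the existence of the simple relation itself, whose local character witnesses non-isi-forking over every base, after which part (1) does reduce to part (2) as you say.
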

Following \cite[Remark 2.6(2)]{LRV1} we also make the following definition.
\begin{defi}
{
\em
An independence relation $\nf$ that satisfies all the properties of a stable independence relation, except for possibly accessibility and union, is called \emph{weakly stable}.
}
\end{defi}
As usual, $3$-amalgamation follows from uniqueness. This was already shown in \cite[Proposition 6.16]{K}, and using Remark \ref{subobj-vs-commuting-squares} that would yield a proof of Proposition \ref{3-amalg-from-uniqueness} below. However, we still include a (cleaner) proof in our setting, because it is instructive. In particular this means that the $3$-amalgamation requirement in (weakly) stable independence relations is superfluous and so our notion of stable independence relation coincides with that of \cite{LRV, LRV1}, except that we also require the union property (in \cite{LRV1}, it is called $\aleph_0$-continuity).
\begin{propo}\label{3-amalg-from-uniqueness}
A basic independence relation with uniqueness also satisfies $3$-amalgamation.
\end{propo}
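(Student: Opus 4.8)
The plan is to transpose the classical argument that stationarity implies the independence theorem into the language of commuting squares: rather than trying to amalgamate all three faces of the horn at once, I would amalgamate two of them along a shared leg, observe that the remaining pair of objects is then \emph{forced} to be independent inside this partial amalgam, and finally invoke uniqueness to reconcile the third face with it. Label the horn as in the diagram, with base $M$, extensions $A,B,C$ of $M$, and independent faces $A \nf_M^{N_1} B$, $A \nf_M^{N_2} C$ and $B \nf_M^{N_3} C$ (so $N_1,N_2,N_3$ are the pairwise amalgams of $\{A,B\},\{A,C\},\{B,C\}$). The goal is to produce the apex $N$ together with compatible maps $N_1,N_2,N_3 \to N$ making everything commute and with the capstone square $A \nf_M^N N_3$ independent.

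First I would apply \textbf{Existence} to the span $N_1 \leftarrow B \to N_3$ to obtain an independent square $N_1 \nf_B^{N'} N_3$; then $N'$ already receives $A$ through $N_1$ and $C$ through $N_3$ under $M$, and $B \to N_1 \to N' = B \to N_3 \to N'$ by construction. Next I would show $A \nf_M^{N'} C$: from $B \nf_M^{N_3} C$ and \textbf{Symmetry} we have $C \nf_M^{N_3} B$, which composes with $N_3 \nf_B^{N'} N_1$ (the first-step square read through \textbf{Symmetry}) by \textbf{Transitivity} to give $C \nf_M^{N'} N_1$; then \textbf{Monotonicity} along $A \to N_1$ and a final \textbf{Symmetry} yield $A \nf_M^{N'} C$. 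Since $A \nf_M^{N_2} C$ holds by hypothesis, $N_2$ and $N'$ are two independent amalgams of the span $A \leftarrow M \to C$, so \textbf{Uniqueness} supplies the apex $N$ with maps $N_2 \to N$ and $N' \to N$ that agree over both $A$ and $C$. Composing gives $N_1 \to N' \to N$, $N_3 \to N' \to N$ and $N_2 \to N$; for the capstone, \textbf{Transitivity} applied to $A \nf_M^{N_1} B$ and $N_1 \nf_B^{N'} N_3$ yields $A \nf_M^{N'} N_3$, and postcomposing with $N' \to N$ gives $A \nf_M^N N_3$ by the defining closure property that $A \nf_C^M B$ iff $A \nf_C^N B$ whenever $M \to N$.

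The transitivity manipulations are routine once one fixes the correct re-basing through \textbf{Symmetry}, so the step needing the most care is checking that \emph{everything commutes} in the assembled cube. Concretely one must verify that the two copies of $A$ reaching $N$, via $N_1 \to N' \to N$ and via $N_2 \to N$, are identified, and likewise for $C$ via $N_3 \to N' \to N$ and $N_2 \to N$; this is exactly the compatibility guaranteed by \textbf{Uniqueness} over the span $A \leftarrow M \to C$, combined with the fact that $A \to N'$ and $C \to N'$ are by definition the composites through $N_1$ and $N_3$. The coherence for $B$ is immediate from the amalgamation square built in the first step. Thus I expect the main obstacle to be bookkeeping rather than conceptual: ensuring that the arrows produced by uniqueness are compatible with \emph{all} the previously constructed maps so that the filled cube genuinely commutes, and that \textbf{Monotonicity} and the closure property are each applied with the legs in the correct position.
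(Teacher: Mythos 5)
Your proof is correct. The paper's own argument shares your opening move---applying existence to the span $N_1 \leftarrow B \to N_3$ to get an independent amalgam $N'$, and then transitivity to conclude $A \nf_M^{N'} N_3$---but it then symmetrizes: it applies existence a second time to the span $N_2 \leftarrow C \to N_3$ to produce a second amalgam $N''$ with $A \nf_M^{N''} N_3$, and invokes uniqueness over the span $A \leftarrow M \to N_3$ to merge $N'$ and $N''$ into the apex $N$. You instead apply uniqueness over the original span $A \leftarrow M \to C$, comparing $N'$ directly with the given face $N_2$; this saves the second application of existence but costs you the extra derivation of $A \nf_M^{N'} C$ via symmetry, transitivity and monotonicity (steps the paper's route never needs). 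Your derivation of that square is sound, and your commutativity bookkeeping is exactly right: the uniqueness cocone identifies the two images of $A$ and of $C$ in $N$, while the first existence square identifies the two images of $B$ in $N'$ and hence in $N$. Both arguments then obtain the capstone $A \nf_M^{N} N_3$ identically, by pushing $A \nf_M^{N'} N_3$ forward along $N' \to N$ using the closure property built into Definition \ref{independence-relation}. In short, the two proofs use the same toolkit with a different choice of which span to feed to uniqueness; the paper's version is marginally more symmetric and avoids monotonicity and symmetry altogether, while yours is marginally more economical with existence.
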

\begin{proof}
Consider a horn from \ref{independence-relation} and, using existence, find independent squares
\[
\vcenter{\vbox{\xymatrix@=1pc{
  N_1 \ar[r] & N' \\
  B \ar [u] \ar[r] & N_3 \ar[u]
}}}
\quad \text{and} \quad
\vcenter{\vbox{\xymatrix@=1pc{
  N_2 \ar[r] & N'' \\
  C \ar [u] \ar[r] & N_3 \ar[u]
}}}
\]
By transitivity, the squares 
\[
\vcenter{\vbox{\xymatrix@=1pc{
  A \ar[r] & N' \\
  M \ar [u] \ar[r] & N_3 \ar[u]
}}}
\quad \text{and} \quad
\vcenter{\vbox{\xymatrix@=1pc{
  A \ar[r] & N'' \\
  M \ar [u] \ar[r] & N_3 \ar[u]
}}}
\]
are independent. By uniqueness, they can be completed as follows
\[
\xymatrix@=2pc{
  & N'' \ar@{-->}[r] & N \\
  A \ar[ru] \ar[rr] & & N' \ar@{-->}[u] \\
  M \ar[u] \ar[r] & N_3 \ar[uu] \ar[ur] & \\
}
\]
This completes the starting horn to a cube whose diagonal square is independent.        
\end{proof}
\begin{rem}
\label{subobj-vs-commuting-squares}
{
\em
We compare the two different approaches to independence. Throughout this remark we fix an AECat $\ck$ with AP. In \cite{K, K1} an independence relation is a relation on triples of subobjects, whereas in \cite{LRV, LRV1} and in this paper we view an independence relation as a relation on commuting squares. We sketch how one can be recovered from the other, based on ideas from \cite[Definition 8.2]{LRV}.

First, suppose that $\nf$ is a basic independence relation in the subobject sense (basic means it satisfies \cite[Definition 4.5]{K1} without the union property, that is: invariance, monotonicity, transitivity, symmetry, existence and extension). We can define $\nf^{\square}$ by declaring a commuting square
\[
\xymatrix@=1pc{
  A \ar[r] & M \\
  C \ar[r] \ar[u] & B \ar[u]
}
\]
independent iff $A \nf_C^M B$, where $A, B, C \leq M$ are the subobjects represented by the morphisms in the square. It is a straightforward (but lengthy) check that $\nf^{\square}$ is a basic independence relation on commuting square in the sense of Definition \ref{independence-relation}.

Conversely, suppose that $\nf$ is a basic independence relation on commuting squares in the sense of Definition \ref{independence-relation}. Then we define an independence relation $\nf^\leq$ on subobjects as follows. For $A, B, C \leq M$ we set $A \nf_C^{\leq, M} B$ iff there are $M \to N$ and $A', B' \leq N$ with $A, C \leq A'$ and $B, C \leq B'$ such that
\[
\xymatrix@=1pc{
  A' \ar[r] & N \\
  C \ar[r] \ar[u] & B' \ar[u]
}
\]
is an $\nf$-independent square. Here the morphisms in the above square can be chosen to be any (compatible) representatives of the subobjects involved. Again, a straightforward (but lengthy) check shows that $\nf^\leq$ is a basic independence relation on subobjects in the sense of \cite[Definition 4.5]{K1} (without the union property).

These operations are inverse to one another, so for a basic independence relation we may take either perspective without losing anything. We can then further compare the other properties. Let $\nf$ be a basic independence relation, then
\begin{itemize}
\item $\nf$ satisfies base monotonicity iff it satisfies base monotonicity in the sense of \cite[Definition 4.9]{K1},
\item $\nf$ satisfies uniqueness iff it satisfies stationarity in the sense of \cite[Definition 4.9]{K1},
\item $\nf$ satisfies $3$-amalgamation iff it satisfies $3$-amalgamation in the sense of \cite[Definition 6.7]{K} iff it satisfies the independence theorem in the sense of \cite[Definition 4.9]{K1}.
\end{itemize}
Things become a little more subtle when we consider the union property and accessibility versus union and club local character in the sense of \cite[Definitions 4.5 and 4.9]{K1}. Let us weaken the definition of union in \cite[Definitions 4.5]{K1} to the following.
\begin{description}
\item[Weak union] Let $(B_i)_{i \in I}$ be a directed system with cocone $M$ and write $B = \colim_{i \in I} B_i$. Suppose furthermore that we have $C \leq A \leq M$ with $C \leq B_i$ for all $i \in I$. Then if $A \nf_C^M B_i$ for all $i \in I$, we have $A \nf_C^M B$.
\end{description}
The weakening here is the requirement that $C \leq A$ and $C \leq B_i$. However, this weaker property is still enough for the canonicity results in Theorem \ref{canonicity}, because the proofs can be adjusted so that at the start we can assume the base ($C$ in the above) to be included in all the other relevant subobjects (by the extension property). It is now straightforward (but lengthy) to check that if $\nf$ is accessible and satisfies union in the sense of Definition \ref{independence-relation-accessible-union} then it satisfies club local character (in the sense of \cite[Definition 4.9]{K1}) and weak union. The converse is not clear to us, but the above is enough for canonicity. This all becomes a lot easier when $\ck$ has joins of subobjects, which is almost always the case in this paper, see Remark \ref{joins-of-subobjects-make-things-easier}.

In summary, canonicity as in Theorem \ref{canonicity} does indeed go through with the definitions in as in this paper. Furthermore, given a stable/simple/NSOP$_1$-like independence relation in the sense of Definition \ref{independence-hierarchy} we get the corresponding independence relation on subobjects in the sense of \cite[Definition 4.10]{K1}, except that we possibly have to weaken it to have weak union.
}
\end{rem}
\begin{rem}
\label{joins-of-subobjects-make-things-easier}
{
\em
If our AECat $\ck$ has joins of subobjects, the entire comparison in Remark \ref{subobj-vs-commuting-squares} becomes a lot easier. This is because then for subobjects $A, B, C \leq M$ we have $A \nf_C^M B$ in the subobject sense iff
\[
\xymatrix@=1pc{
  A \vee C \ar[r] & M \\
  C \ar[r] \ar[u] & B \vee C \ar[u]
}
\]
is independent in the commuting square sense. One then easily sees that the property ``weak union'' from Remark \ref{subobj-vs-commuting-squares} is equivalent to the property ``union'' from \cite[Definitions 4.5]{K1} (modulo the properties of a basic independence relation).

In almost all our cases the AECat of interest has joins of subobjects. This is because as soon as the AECat is locally multipresentable we have joins of subobjects: namely for subobjects $A, B \leq M$ take the wide pullback of all subobjects of $M$ that $A$ and $B$ factor through. Due to Lemma \ref{lmp1} working with a factorization system, like in Theorem \ref{simple}, will yield a locally multipresentable AECat. In Theorems \ref{lmp-nsop} and \ref{lmp-simple} being locally multipresentable is simply an assumption. Theorem \ref{nsop} is our only main result where the AECat may not be locally multipresentable. 
}
\end{rem}

\section{$\NSOP_1$-like independence}
\begin{defi}\label{coherently-amalgamable-squared}
{
\em
Let $\cm$ be a class of monomorphisms in a locally presentable category $\ck$ that is closed under composition and contains all isomorphisms. We call $\cm$ \emph{coherently amalgamable} if it satisfies the following axioms:
\begin{itemize}
\item[(A1)] if $fg \in \cm$ and $f \in \cm$ then $g \in \cm$ (this is sometimes called \emph{coherence}),
\item[(A2)] the pushout of two morphisms in $\cm$ is in $\cm$.
\end{itemize}
We call $\cm$ \emph{squared} if it is coherently amalgamable and additionally satisfies:
\begin{itemize}
\item[(A3)] the pullback of two morphisms in $\cm$ is in $\cm$,
\item[(A4)] the pushout of two morphisms in $\cm$ is a pullback.
\end{itemize}
}
\end{defi}

\begin{rem}
{
\em
We note two things in the context of Definition \ref{coherently-amalgamable-squared}.
\begin{enumerate}
\item Condition (A2) means that, given a pushout square
  $$  
    \xymatrix@=1pc{
      C \ar[r]^{\bar{f}} & P \\
      A \ar [u]^g \ar[r]_f & B \ar[u]_{\bar{g}}
    }
    $$
in $\ck$ with $f,g\in\cm$, then $\bar{f},\bar{g}\in\cm$. This is weaker than $\cm$ being closed under pushouts (see Example \ref{binary-functions}), which means that if $f\in\cm$ then $\bar{f}\in\cm$.

\item By \cite{Ri} (A4) is equivalent, modulo (A1)--(A3), to the condition that every epimorphism in $\cm$ is an isomorphism.
\end{enumerate}
}
\end{rem}
\begin{defi}
{
\em
Given a class $\cm$ in a category $\ck$ that is closed under composition and contains all isomorphisms, we write $\ck_\cm$ for the subcategory of $\ck$ whose objects are those in $\ck$ and whose morphisms are precisely those from $\cm$.
}
\end{defi}
Note that if $\cm$ is coherently amalgamable then $\ck_\cm$ has AP.
\begin{defi}
\label{m-effective-square}
{
\em
Similar to the cellular squares from \cite{LRV1} we define the commuting square of morphisms in $\cm$ below on the left to be \emph{$\cm$-effective} if the induced map $P \to D$ from the pushout below on the right is in $\cm$
$$  
    \xymatrix@=1pc{
      C \ar[r] & D \\
      A \ar [u] \ar[r] & B \ar[u]
    }
    \quad
    \xymatrix@=1pc{
      C \ar[r] & P \\
      A \ar [u] \ar[r] & B \ar[u]
    }
$$
}
\end{defi}
Note that if $\cm$ is squared then $\cm$-effective squares are pullback squares.
\begin{propo}
\label{weakly}
Suppose that $\cm$ is a coherently amalgamable class of monomorphisms in a locally presentable category $\ck$. Then the $\cm$-effective squares form a weakly stable independence relation on $\ck_\cm$.
\end{propo}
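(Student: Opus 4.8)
The plan is to verify directly that the class of $\cm$-effective squares satisfies each clause of the definition of a weakly stable independence relation, namely invariance, monotonicity, transitivity, symmetry, existence, base monotonicity and uniqueness; since Proposition \ref{3-amalg-from-uniqueness} derives $3$-amalgamation from uniqueness, this is enough. Throughout I write $A \sqcup_C B$ for the pushout of a span $A \leftarrow C \to B$, so that a square $(A, C, B, M)$ is $\cm$-effective precisely when the canonical comparison map $A \sqcup_C B \to M$ lies in $\cm$. The recurring tool is the pushout pasting lemma together with axiom (A2): whenever a comparison map between two pushouts can be exhibited as itself a pushout of a morphism already known to be in $\cm$, axiom (A2) places it in $\cm$, and closure under composition then finishes the argument. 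Note in particular that the legs of any pushout of a span in $\cm$ lie in $\cm$ by (A2).

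I would first check the defining closure condition of an independence relation, that $A \nf_C^M B$ iff $A \nf_C^N B$ when a morphism $M \to N$ is present. Writing $P = A \sqcup_C B$, the map $P \to N$ factors as $P \to M \to N$ with $M \to N \in \cm$; closure under composition gives one implication, and coherence (A1) gives the other. Invariance is then immediate, since pushouts are unique up to canonical isomorphism and $\cm$ contains all isomorphisms and is closed under composition; symmetry is immediate because $A \sqcup_C B = B \sqcup_C A$ and the comparison map is unchanged; and existence holds because the pushout square of a span in $\cm$ is $\cm$-effective, its comparison map being an identity. For monotonicity, given $C \to B' \to B \to M$, pasting identifies the comparison map $A \sqcup_C B' \to A \sqcup_C B$ as the pushout of $B' \to B$ along the leg $B' \to A \sqcup_C B'$, both of which lie in $\cm$, so it lies in $\cm$ by (A2); composing with $A \sqcup_C B \to M \in \cm$ yields $A \sqcup_C B' \to M \in \cm$.

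Transitivity and uniqueness are the technical heart. For transitivity, with left square $(A, C, B, M)$ and right square $(M, B, D, N)$ independent, pasting gives $A \sqcup_C D \cong (A \sqcup_C B) \sqcup_B D$ and $M \sqcup_B D \cong M \sqcup_{A \sqcup_C B} (A \sqcup_C D)$. Hence the comparison map $A \sqcup_C D \to M \sqcup_B D$ is the pushout of $A \sqcup_C B \to M$ (in $\cm$ by the left square) along the leg $A \sqcup_C B \to A \sqcup_C D$ (in $\cm$ by (A2)), so it lies in $\cm$; composing with $M \sqcup_B D \to N \in \cm$ from the right square shows $A \sqcup_C D \to N \in \cm$, that is $A \nf_C^N D$. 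For uniqueness, the two hypotheses give maps $P = A \sqcup_C B \to M$ and $P \to M'$ in $\cm$; setting $N = M \sqcup_P M'$ produces, by (A2), legs $M \to N$ and $M' \to N$ in $\cm$ that commute over $P$, and these are exactly the required dashed arrows.

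Finally, for base monotonicity I would set $A' = A \sqcup_C B$ and $N = M$. The maps $A \to M$ and $B \to D \to M$ agree on $C$ because the given square commutes, so they induce $A' \to M$, while the legs $A \to A'$ and $B \to A'$ lie in $\cm$ by (A2). Pasting gives $A' \sqcup_B D \cong A \sqcup_C D$, and the comparison map $A' \sqcup_B D \to M$ is then exactly the one witnessing the given independence $A \nf_C^M D$; hence it is in $\cm$, which is precisely $A' \nf_B^N D$. The only real obstacle is organizational: keeping track of the several pasted pushouts and confirming at each step that the induced comparison morphism is a pushout of an $\cm$-morphism, so that (A2) applies. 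Notably no step uses the squared axioms (A3)--(A4), consistent with the statement that coherent amalgamability alone yields weak stability.
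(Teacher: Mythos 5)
Your proof is correct and follows essentially the same route as the paper, which simply delegates to \cite[Theorem 2.7, Remark 2.8]{LRV1}: a direct verification of each axiom via pushout pasting, using only (A1), (A2) and closure under composition, with uniqueness witnessed by $M \sqcup_{A\sqcup_C B} M'$ and base monotonicity by $A' = A \sqcup_C B$. The only difference is that you spell out the details the paper leaves to the citation.
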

\begin{proof}
Exactly as in \cite[Theorem 2.7, Remark 2.8]{LRV1} where only (A2) is needed (and not
the closedness of $\cm$ under pushouts).
\end{proof}
\begin{propo}\label{very}
Suppose that $\cm$ is a squared class of monomorphisms in a locally presentable category $\ck$. Then the pullback squares form a basic independence relation on $\ck_\cm$.
\end{propo}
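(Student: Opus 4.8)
The plan is to verify each clause of Definition \ref{independence-relation} directly, leaning on standard facts about pullbacks together with the four axioms of a squared class. Two structural remarks organize everything: every morphism of $\cm$ is a monomorphism, and $\ck$, being locally presentable, is complete and cocomplete, so all pullbacks and pushouts invoked below exist. Since $\cm$ is closed under composition (and, for genuine pullbacks taken inside the argument, closed under pullback by (A3)), all the squares I produce again consist of $\cm$-morphisms and hence live in $\ck_\cm$, so it suffices to check the pullback condition in $\ck$ each time.

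First I would dispatch the ``soft'' properties, each of which is a named lemma about pullbacks. The defining compatibility $A \nf_C^M B \iff A \nf_C^N B$ along a morphism $M \to N$ holds because $M \to N \in \cm$ is monic: a commuting square with cocone $M$ is a pullback iff the square obtained by composing its cospan with the monomorphism $M \to N$ is a pullback (one direction is immediate, the other uses that $M \to N$ is monic to cancel). Invariance is the invariance of the pullback property under isomorphism of diagrams; symmetry is the fact that a square is a pullback iff its transpose across the diagonal is; and transitivity is exactly the horizontal pullback-pasting lemma, namely that if the two inner squares are pullbacks then so is the outer rectangle, after noting that the outer legs are composites of $\cm$-morphisms.

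The two clauses carrying real content are existence and monotonicity. For existence I would complete a span $A \leftarrow C \to B$ of $\cm$-morphisms by its pushout $P = A +_C B$: axiom (A2) places the pushout legs $A \to P$ and $B \to P$ in $\cm$, while axiom (A4) says precisely that this pushout square is also a pullback, so it is an independent square completing the span. For monotonicity, given $A \nf_C^M B$ and a factorization $C \to B' \to B$ with $B' \to B$ in $\cm$, I would form the auxiliary pullback $A \times_M B'$ and compare it to $C \cong A \times_M B$. The monomorphism $B' \to B$ induces a monomorphism $\iota \colon A \times_M B' \to A \times_M B$ (it is the pullback of $B' \to B$ along $A \times_M B \to B$), while the maps $C \to A$ and $C \to B'$, which agree over $M$ by commutativity of the original square, induce $\phi \colon C \to A \times_M B'$; by construction $\iota\phi$ is the canonical isomorphism $C \cong A \times_M B$. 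As $\iota$ is then split epic and also monic, it is an isomorphism, whence $\phi$ is an isomorphism and $(A,C,B',M)$ is a pullback, i.e.\ $A \nf_C^M B'$.

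I expect monotonicity to be the one step needing a genuine (if brief) argument: existence is a two-line consequence of (A2) and (A4), and the remaining properties are standard pullback lemmas, but monotonicity requires the small diagram chase showing that restricting the right-hand leg along the monomorphism $B' \to B$ does not alter the pullback, for which the ``monomorphism $+$ split epimorphism $=$ isomorphism'' observation is the cleanest route. The only other thing to track is that each resulting square consists of $\cm$-morphisms, which follows from closure under composition together with (A3).
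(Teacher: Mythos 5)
Your proof is correct and takes essentially the same route as the paper, which simply calls the verification straightforward and defers to \cite[Theorem 5.1]{LRV}: the intended content is exactly your direct checks (cancelling along the monomorphism $M\to N$ for the compatibility condition, the pasting lemma for transitivity, (A2) and (A4) for existence, and the monomorphism-plus-split-epimorphism argument for monotonicity). Nothing is missing.
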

\begin{proof}
Straightforward and exactly as in \cite[Theorem 5.1]{LRV}.
\end{proof}
\begin{rem}
{
\em
Note that in both Propositions \ref{weakly} and \ref{very} we do not need the full assumption of $\ck$ being locally presentable, having pushouts and pullbacks of (co)spans of morphisms in $\cm$ is enough.
}
\end{rem}

\begin{defi}\label{cubic}
{
\em
A squared class $\cm$ of monomorphisms will be called \emph{cubic} if,
given a horn in $\cm$ whose bottom, front and left-side squares are pullbacks
$$ 
\xymatrix@=2pc{
	& N_2 \ar @{-->}[rr]^{} & & N\\
	A \ar[rr]^>>>>>>>>>>{}\ar [ur]^{} &
	& N_1 \ar @{-->}[ur]_{} & \\
	& C\ar'[u][uu]^{}\ar'[r][rr] & & N_3\ar @{-->} [uu]_{} & \\
	M\ar[ur]^{}\ar[rr]\ar[uu]^{} & & B\ar[ur]_{}\ar[uu]_>>>>>>>>>{} &  
}
$$
its colimit has all edges in $\cm$ and the diagonal square 
$$  
\xymatrix@=1pc{
  A \ar[r] & N \\
  M \ar [u] \ar[r] & N_3 \ar[u]
}
$$
is a pullback.
}
\end{defi}

\begin{rem}
{
\em
Definition \ref{cubic} implies (A4). It suffices to take a horn with $M = C = B$ and $A = N_1 = N_2$, and identity morphisms between those objects. Then the colimit is a pushout of $A \leftarrow M \to N_3$, which is also the diagonal square. This shows that the condition about the diagonal square is needed.
}
\end{rem}
\begin{question}
{
\em
Does the condition on the diagonal square in Definition \ref{cubic} follow from being squared and the cocone in Definition \ref{cubic} being in $\cm$?
}
\end{question}
The following is immediate from the definitions.
\begin{propo}\label{cubic1}
If $\cm$ is a cubic class in a locally presentable category $\ck$ then the independence relation on $\ck_\cm$ given by pullback squares satisfies $3$-amalgamation.
\end{propo}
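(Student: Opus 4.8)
The plan is to unwind the two definitions and notice that the horn appearing in the $3$-amalgamation clause of Definition \ref{independence-relation} is literally the horn of Definition \ref{cubic}. For the relation under consideration a commuting square is independent exactly when it is a pullback, so the hypothesis of $3$-amalgamation---that every (solid) square of the horn is independent---says precisely that the three faces through the common source $M$, i.e.\ the bottom, front and left-side squares of Definition \ref{cubic}, are pullbacks. Before doing anything else I would note that this makes the statement well-posed: a cubic class is by definition squared, so Proposition \ref{very} already tells us that the pullback squares form a basic independence relation on $\ck_\cm$; the only thing left to check is the $3$-amalgamation property itself.

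First I would take $N$ to be the colimit of the horn, which exists since a locally presentable $\ck$ is cocomplete. This colimit comes with coprojections $N_1\to N$, $N_2\to N$, $N_3\to N$, which are exactly the dashed arrows required by $3$-amalgamation, and the cocone condition gives for free that the completed cube commutes. Because every morphism of the horn is a morphism of $\ck_\cm$, it is in $\cm$, so the horn is a horn in $\cm$ whose bottom, front and left-side squares are pullbacks; the hypotheses of Definition \ref{cubic} are therefore met verbatim. Applying the cubic property then delivers both outputs at once: all edges of the completed cube---in particular each coprojection $N_i\to N$---lie in $\cm$, so the whole cube is a diagram in $\ck_\cm$, and the diagonal square of Definition \ref{cubic} (with base the common source $M$, top-left $A$, bottom-right $N_3$ and apex $N$) is a pullback. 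Since independent squares are by definition the pullback squares, this diagonal square is independent, and that is exactly the independence demanded as the conclusion of $3$-amalgamation.

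There is no genuine obstacle here---the argument is immediate from the definitions, as advertised---so the only real work is bookkeeping. One must identify the three solid faces of the $3$-amalgamation horn (the faces meeting at the common source) with the bottom, front and left-side squares named in Definition \ref{cubic}, and one must use the identity ``independent $=$ pullback'' in both directions: once to feed the hypothesis of $3$-amalgamation into Definition \ref{cubic}, and once to read its diagonal-square conclusion back out as an independence statement. Everything else---the existence of the apex $N$, the commutativity of the cube, the production of the dashed cocone, and the membership of the new edges in $\cm$---is handed to us directly by cocompleteness together with the cubic property, so no separate verification of those points is required.
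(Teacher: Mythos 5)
Your proposal is correct and matches the paper, which simply declares the proposition ``immediate from the definitions''; you have written out exactly that unwinding (horn of independent squares $=$ horn of pullbacks, colimit supplies the dashed cocone, the cubic condition puts its edges in $\cm$ and makes the diagonal square a pullback, hence independent). No gaps.
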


\begin{rem}
{
\em
A horn in Definition \ref{cubic} is $2$-dimensional. A $1$-dimensional horn is a span $A \leftarrow M \to B$. Hence a $2$-dimensional analogue of Definition \ref{coherently-amalgamable-squared}(A4) is that a colimit of a horn in Definition \ref{cubic} has all edges in $\cm$ the resulting cube is a limit. This means that $M$ is a limit of the right upper corner horn   
$$ 
\xymatrix@=2pc{
        & N_2 \ar[rr]^{h_0} & & N\\
        A \ar[rr]^>>>>>>>>>>>>>>>>{e_1}\ar[ur]^{e_0} &
        & N_1 \ar[ur]_{h_1} & \\
        & C\ar'[u][uu]_{f_0}\ar'[r][rr]^{f_1} & & N_3\ar[uu]_{h_2} & \\
        M\ar@{-->}[ur]^{c}\ar@{-->}[rr]_b\ar@{-->}[uu]^{a} & & B\ar[ur]_{g_1}\ar[uu]_>>>>>>>>>>>>>>{g_0} &  
      }
$$
This follows from $\cm$ being cubic. Indeed, let $u:X\to A$, $v:X\to C$ and $w:X\to B$ be a cone to the right upper corner horn. Then
$$
h_0e_0u=h_1e_1u=h_1g_0w=h_2g_1w.
$$
Since the diagonal square is a pullback, there is a unique $t:X\to M$ such that $at=u$ and $g_1bt=g_1w$. Hence $bt=w$ and 
$$
f_1ct=g_1bt=g_1w=f_1v.
$$
Thus $ct=v$, which implies that $M$ is a limit.

On the other hand, it is easy to see that if $M$ is a limit of $h_i:N_i\to N$, $i=1,2,3$ then $\cm$ is cubic. This is different from the situation before though, where $M$ is the limit of a diagram including $A, B, C$.
}
\end{rem}
\begin{defi}
\label{continuous-accessible}
{
\em
A class $\cm$ of morphisms that is closed under composition and contains all isomorphisms in a category $\ck$ will be called
\begin{enumerate}
\item[(A5)] \emph{continuous} if $\ck_\cm$ is closed under directed colimits in $\ck$ and
\item[(A6)] \emph{accessible} if $\ck_\cm$ is an accessible category.
\end{enumerate}
}
\end{defi}
 
\begin{rem}\label{aec}
{
\em
We note two things in the context of Definition \ref{continuous-accessible}.
\begin{enumerate}
\item Condition (A5) means that, given a directed diagram 
$(k_{ij}:K_i\to K_j)_{i\leq j\in I}$ in $\ck_\cm$, its colimit 
$k_i:K_i\to K$ in $\ck$ exists, all $k_i$ belong to $\cm$ and, for a cocone $l_i:K_i\to L$ with $l_i$ in $\cm$, the induced morphism $K\to L$ belongs to $\cm$.

\item If $\cm$ is a coherently amalgamable, continuous and accessible class of monomorphisms in a locally presentable category $\ck$ then $\ck_\cm$ is an AECat with AP. If $\ck$ is locally finitely presentable then $\ck_\cm$ is in fact (equivalent to) an AEC (abstract elementary class) with AP \cite[Corollary 5.7]{BR}.
\item Conditions (A1), coherence, and (A5) together imply that if an object $A$ is $\lambda$-presentable in $\ck$ (where $\ck$ is any category, not necessarily locally presentable) then it also is $\lambda$-presentable in $\ck_\cm$. This follows immediately from writing out definitions.
\end{enumerate}
}
\end{rem}

\begin{repeated-theorem}[Theorem \ref{nsop}]
Suppose that $\ck$ is a locally finitely presentable category equipped with a cubic, continuous and accessible class $\cm$ of monomorphisms. Then $\ck_\cm$ has an $\NSOP_1$-like independence relation given by pullback squares.
\end{repeated-theorem}
\begin{proof}
Following Propositions \ref{very} and \ref{cubic1}, pullback squares give a basic independence relation that also satisfies $3$-amalgamation. Since $\cm$ is continuous $\ck_\cm$ has directed colimits and so $(\ck_\cm)^2$ has directed colimits, so to prove the union property it is enough to show that $\ck_{\nf}$ is closed in $(\ck_\cm)^2$ under directed colimits. Let $(f_i: K_i \to L_i)_{i \in I}$ be a directed diagram in $\ck_{\nf}$ with morphisms $(k_{ij},l_{ij}): f_i \to f_j$ for $i \leq j$. So for all $i \leq j$ the square
\begin{align}
\label{colimiting-cocone}
\begin{split}
\xymatrix@=1pc{
    L_i \ar[r]^{l_{ij}} & L_j \\
    K_i \ar[u]^{f_i} \ar[r]_{k_{ij}} &
    K_j \ar[u]_{f_j}
}
\end{split}
\end{align}
is a pullback. Let $((k_i,l_i): f_i \to f)_{i \in I}$ be the colimiting cocone in $(\ck_\cm)^2$. Fix $i \in I$, then the square
$$
\xymatrix@=1pc{
    L_i \ar[r]^{l_i} & L \\
    K_i \ar[u]^{f_i} \ar[r]_{k_i} &
    K \ar[u]_{f}
}
$$
is a pullback, because it is a directed colimit of pullbacks as in (\ref{colimiting-cocone}) (for $i \leq j$), and pullbacks commute with directed colimits in $\ck$ (see \cite[Proposition 1.59]{AR}). We thus see that the cocone $((k_i,l_i): f_i \to f)_{i \in I}$ is in $\ck_{\nf}$. Now let $((p_i, q_i): f_i \to g)_{i \in I}$ be another cocone such that for all $i \in I$ the square
\begin{align}
\label{other-cocone}
\begin{split}
\xymatrix@=1pc{
    L_i \ar[r]^{q_i} & Q \\
    K_i \ar[u]^{f_i} \ar[r]_{p_i} &
    P \ar[u]_{g}
}
\end{split}
\end{align}
is a pullback, and let $(p,q): f \to g$ be the corresponding universal morphism in $(\ck_\cm)^2$. Then
$$
\xymatrix@=1pc{
    L \ar[r]^{q} & Q \\
    K \ar[u]^{f} \ar[r]_{p} &
    P \ar[u]_{g}
}
$$
is a pullback because it is a directed colimit of pullbacks as in (\ref{other-cocone}). We conclude that $f$ is indeed the colimit of $(f_i)_{i \in I}$ in $\ck_{\nf}$.

It remains to prove that $\ck_{\nf}$ is accessible. First, we will show that there is a regular cardinal $\mu$ such that, given $K\to L$ in $\cm$ with $L$ $\mu$-presentable, then $K$ is $\mu$-presentable. Let $\lambda$ be such that $\ck_\cm$ is $\lambda$-accessible. There is a regular cardinal $\kappa\geq\lambda$ such that, given $K\to L$ in $\cm$ with $L$ $\lambda$-presentable, then $K$ is $\kappa$-presentable. It suffices to take $\mu \geq \kappa$ and $\mu \unrhd \lambda$. Then $\mu$-presentable objects are $\mu$-small $\lambda$-directed colimits in $\ck_\cm$ of $\lambda$-presentable objects (see \cite[Proposition 2.3.11]{MP}). Indeed, let $K \to L$ be in $\cm$ and $L$ be $\mu$-presentable. We express $L$ as a $\mu$-small directed colimit $(l_i: L_i\to L)_{i \in I}$ in $\ck_\cm$ of $\lambda$-presentable objects. Form pullbacks
$$
 \xymatrix@=1pc{
        K \ar@{}\ar[r]^{} & L \\
        K_i \ar [u]^{k_i} \ar [r]_{} &
        L_i \ar[u]_{l_i}
      }
      $$
Then the $K_i$ are $\mu$-presentable and $(k_i: K_i \to K)_{i \in I}$ is a $\mu$-small $\lambda$-directed colimit (because pullbacks commute with directed colimits in $\ck$). Thus $K$ is $\mu$-presentable.
 
Now, consider a morphism  $f: K \to L$ in $\cm$. Express $L$ as a $\mu$-directed colimit of $\mu$-presentable objects ($\ck_\cm$ is $\mu$-accessible by our choice of $\mu$, see \cite[Theorem 2.3.10]{MP}) $L_i$ in $\ck_\cm$. First form the pullbacks on the left below. The universal property then yields morphisms $k_{ij}: K_i \to K_j$ for all $i \leq j$ such that the square on the right below commutes and is a pullback.
$$
\xymatrix@=1pc{
    L_i \ar[r]^{l_i} & L \\
    K_i \ar[u]^{f_i} \ar[r]_{k_i} &
    K \ar[u]_{f}
}
\quad
\xymatrix@=1pc{
    L_i \ar[r]^{l_{ij}} & L_j \\
    K_i \ar[u]^{f_i} \ar[r]_{k_{ij}} &
    K_j \ar[u]_{f_j}
}
$$  
This makes $f$ a $\mu$-directed colimit of $f_i$ in $\ck_{\nf}$. Since $K_i$ are $\mu$-presentable the $f_i$ are $\mu$-presentable in $(\ck_\cm)^2$, and so they are in $\ck_{\nf}$ too because the inclusion functor preserves directed colimits and $\nf$ seen as a class of morphisms in $(\ck_\cm)^2$ is left-cancellable. We conclude that $\ck_{\nf}$ is $\mu$-accessible.  
\end{proof} 

\begin{rem}\label{nsop1}
{
\em
In Theorem \ref{nsop}, we could only assume that $\ck$ is locally presentable and directed colimits of morphisms from $\cm$ commute with pullbacks. 

This generalization applies to Grothendieck toposes or Grothendieck abelian categories and regular monomorphisms.
}
\end{rem}
\begin{rem}\label{regular-monos-almost-squared}
{
\em
If the class $\cm$ of regular monomorphisms in a locally finitely presentable category $\ck$ is closed under composition then it is continuous and accessible (see the proof of \cite[Lemma 4.3]{LRV}). If furthermore condition (A2) in Definition \ref{coherently-amalgamable-squared} holds, so the pushout of two regular monomorphisms consists of regular monomorphisms, then $\cm$ is also squared. So in particular, when $\ck$ is a coregular category then the class of regular monomorphisms is squared, continuous and accessible. The converse is not true, see Example \ref{binary-functions}.
}
\end{rem}
\begin{exam}\label{graphs}
{
\em
The category $\Gra$ of graphs (= sets equipped with a symmetric binary relation) is locally finitely presentable and the class $\cm$ of embeddings (= regular monomorphisms) is cubic, continuous and accessible. Thus pullback squares form an $\NSOP_1$-like independence relation on $\Gra_\cm$. Since $\Gra$ does not have effective unions, this independence is not stable.
}
\end{exam}
\begin{exam}\label{binary-functions}
{
\em
Let $\BinFunc$ be the category of binary functions $f: X \times X \to Y$. Morphisms $f_1 \to f_2$ are pairs $(u,v)$ such  that the square
$$
\xymatrix@=2pc{
X_1\times X_1\ar[r]^{u\times u} \ar[d]_{f_1} & X_2\times X_2\ar[d]^{f_2} \\
Y_1 \ar[r]_{v} & Y_2
}
$$
commutes. As $\BinFunc$ is a category of algebras in a fixed finitary (multi-sorted) signature, it is locally finitely presentable (see \cite[Remark 3.4(6)]{AR}). Every monomorphism is regular, and these are exactly the pairs $(u,v)$ such that $u$ and $v$ are injective.

Pushouts of monomorphisms are described as follows. Let $f_i: X_i \times X_i \to Y_i$ be binary functions for $i \in \{0, 1, 2\}$, where $f_0$ is a subfunction of $f_1$ and $f_2$. We will construct the pushout of the inclusions $f_1 \leftarrow f_0 \to f_2$. We may assume $X_1 \setminus X_0$ and $X_2 \setminus X_0$ to be disjoint, and the same for $Y_1 \setminus Y_0$ and $Y_2 \setminus Y_0$. Set $X = X_1 \cup X_2$ and $Y = Y_1 \cup Y_2 \cup [(X_1 \setminus X_0) \times (X_2 \setminus X_0)] \cup [(X_2 \setminus X_0) \times (X_1 \setminus X_0)]$, and define $f: X \times X \to Y$ as
\[
f(x, x') = \begin{cases}
f_1(x, x') & \text{if } x, x' \in X_1 \\
f_2(x, x') & \text{if } x, x' \in X_2 \\
(x, x') & \text{else}
\end{cases}
\]
It is then straightforward to check that the inclusions $f_1 \to f \leftarrow f_2$ form the pushout.

So the class $\cm$ of regular monomorphisms is closed under composition and pushouts of morphisms in $\cm$ are again in $\cm$. So by Remark \ref{regular-monos-almost-squared} we have that $\cm$ is squared, continuous and accessible. Using a construction similar to the pushout above one indeed verifies that $\cm$ is also cubic.

We also show that $\BinFunc$ is not coregular (or cellular), establishing that the regular monomorphisms forming a squared class is indeed weaker than being coregular. This weakening of assumptions compared to \cite{LRV1} is relevant, because our results apply to $\BinFunc_\cm$.

Let $X_0 = \{a, b \}$, $Y_0 = (X_0 \times X_0) \cup \{\heartsuit, \clubsuit\}$ and let $f_0: X_0 \times X_0 \to Y_0$ be the inclusion. Take $X_1 = \{ a, b, c \}$ and $Y_1 = Y_0$, let $f_1: X_1 \times X_1 \to Y_1$ extend $f_0$ by setting $f_1(c, a) = f_1(a, c) = f(c, c) = \heartsuit$ and $f_1(c, b) = f(b, c) = \clubsuit$. Finally, let $X_2 = \{d\}$ and $Y_2 = \{ *, \heartsuit, \clubsuit \}$ and let $f_2: X_2 \times X_2$ be the constant function with value $*$.

We let $(u_1, v_1): f_0 \to f_1$ be the inclusion and $(u_2, v_2): f_0 \to f_2$ is given by $u_2(a) = u_2(b) = d$ and $v_2$ maps all of $X_0 \times X_0$ to $*$ and is the identity on $\{\heartsuit, \clubsuit\}$. Now suppose that we have the following commuting diagram:
$$  
  \xymatrix@=2pc{
    f_1 \ar[r]^{(u_2', v_2')} & f \\
    f_0 \ar [u]^{(u_1, v_1)} \ar [r]_{(u_2, v_2)} & f_2 \ar[u]_{(u_1', v_1')}
  }
$$
Then we calculate
\begin{align*}
v'_1(\heartsuit) =
v'_2(f_1(a, c)) =
f(u'_1 u_2(a), u'_2(c)) =
f(u'_1 u_2(b), u'_2(c)) =
v'_2(f_1(b, c)) =
v'_1(\clubsuit),
\end{align*}
and we see that $v_1'$ is not injective and hence not a regular monomorphism.
}
\end{exam}
Those familiar with model theory may recognize Examples \ref{graphs} and \ref{binary-functions} as the theories of the random graph \cite[Exercise 3.3.1]{TZ} and the generic binary function \cite[Section 3]{KR} respectively. Indeed, these are the theories of the existentially closed models of $\Gra_\cm$ and $\BinFunc_\cm$ respectively.
\begin{exams}\label{not-cubic}
{
\em
We consider some examples of locally finitely presentable categories $\ck$ with a squared, continuous and accessible class of monomorphisms $\cm$ that is not cubic (in fact: $3$-amalgamation fails). Hence, in each case, pullback squares in $\cm$ just form a basic independence relation, which is continuous and accessible. In each case $\cm$ will be the class of regular monomorphisms, making many of these properties automatic, see Remark \ref{regular-monos-almost-squared}. The remaining two properties---that $\cm$ is closed under composition and pushouts of morphisms in $\cm$ are again in $\cm$---will each time be straightforward to verify.
\begin{enumerate}
\item Take $\ck = \Pos$, the category of posets, and let $\cm$ be the class of embeddings (= regular monomorphisms). We show that $\cm$ is not cubic. Consider the diagram with $M = \emptyset, A = \{a\}, B = \{b\}, C = \{c\}, N_1 = \{a < b\}, N_2 = \{c < a\}, N_3 = \{b < c\}$. In any cocone $N$ we get $a \leq b \leq c \leq a$, so they all need to be identified, hence none of the morphisms $N_i \to N$ (for $i \in \{1,2,3\}$) can be a monomorphism. The same happens in the category of Boolean algebras.

\item In the category $\Grp$ of groups, regular monomorphisms coincide with monomorphisms and hence with injective group homomorphisms (see e.g.\ \cite[Exercise 7H]{AHH}). So we take $\cm$ to be the class of injective group homomorphisms.  The fact that the pushout of two monomorphisms consists again of monomorphisms is well known and is often called the ``amalgamated free product'' (see e.g.\ \cite[Theorem 2]{S}), so $\cm$ is indeed squared, continuous and accessible. However, $\cm$ is not cubic. To see this we can use the construction in \cite[Proposition 4.1]{SU} or the following similar construction. Consider the diagram where $M$ is the trivial group, $A = \langle a \rangle$, $B = \langle b, b' \rangle$, $C = \langle c, c' \rangle$, $N_1 = \langle a, b, b' : a b a^{-1} = b' \rangle$, $N_2 = \langle a, c, c' : a c a^{-1} = c' \rangle$ and $N_3 = \langle b, b', c, c' : b'c' = c'b' \rangle$ with the obvious embeddings between these groups. Here $\langle \bar{x} : \bar{R} \rangle$ denotes the group presented with generators $\bar{x}$ and relators $\bar{R}$. Any cocone $N$ for this diagram will have that $bc = cb$, however $bc \neq cb$ in $N_3$, so the morphism $N_3 \to N$ cannot be a monomorphism.

We note that this example heavily relies on non-commuting elements. In fact, in the category of abelian groups the pullback squares of regular monomorphisms form a stable independence relation \cite[Example 4.8(1) and Theorem 5.1]{LRV}.
\end{enumerate}
}
\end{exams}

\section{Simple independence}
Weakly stable independence is base monotonic because base monotonicity follows from existence and uniqueness. By essentially replacing the uniqueness property by 3-amalgamation we thus also lose the base monotonicity property. In this section we give a characterization in terms of factorization systems of when base monotonicity can be recovered.

\begin{defi}
{
\em
A coherently amalgamable class $\cm$ of monomorphisms in a locally presentable category $\ck$ will be called \emph{strongly coherently amalgamable} if there is a class $\ce$ of epimorphisms such that $(\ce,\cm)$ is a factorization system.
}
\end{defi}
\begin{fact}[{\cite{KELLY}}]
\label{regular-factorization-system}
In a category with cokernel pairs and equalizers of cokernel pairs (e.g.\ any locally presentable category), the following are equivalent:
\begin{enumerate}
\item the composition of two regular monomorphisms is a regular monomorphism,
\item regular monomorphisms coincide with strong monomorphisms,
\item for any morphism $f$, if $f = m e$ is its factorization through the equalizer $m$ of its kernel pair then $e$ is an epimorphism,
\item taking $\ce$ and $\cm$ to be the classes of epimorphisms and regular monomorphisms respectively, we have that $(\ce, \cm)$ is a factorization system.
\end{enumerate}
\end{fact}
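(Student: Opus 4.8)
The plan is to reduce everything to two structural facts and then run a short implication cycle. The first fact is that every regular monomorphism is a strong monomorphism, i.e.\ it has the unique diagonal fill-in property against every epimorphism: if $m$ is the equalizer of $a,b\colon Y\to Z$ and we are handed a commuting square with an epi $e$ on one side and $m$ on the other, then $a$ and $b$ agree after the relevant map precomposed with the epi, and cancelling $e$ shows that map equalizes $a,b$, hence factors through $m$; uniqueness is free since $m$ is monic. The second fact is the minimality of the canonical factorization. Given $f\colon A\to B$ I would form the cokernel pair $u,v\colon B\to C$ of $f$ and let $m\colon I\to B$ be its equalizer (the hypothesis supplies exactly this); since $uf=vf$, $f$ factors as $f=me$ for a unique $e\colon A\to I$, and $m$ is a regular monomorphism. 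If moreover $f=ng$ with $n$ any regular monomorphism, say the equalizer of $a,b\colon B\to D$, then $af=bf$, so the universal property of the cokernel pair yields $w\colon C\to D$ with $wu=a$ and $wv=b$; since $um=vm$ this forces $am=bm$, so $m$ factors through $n$. Thus $m$ is the least regular subobject of $B$ through which $f$ factors.

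With these in hand I would prove $(1)\Rightarrow(3)\Rightarrow(2)\Rightarrow(1)$ and then close the loop with $(1)\Leftrightarrow(4)$. For $(1)\Rightarrow(3)$, assume regular monomorphisms compose and take the canonical factorization $f=me$; to see that $e$ is epic, suppose $ge=he$, let $j$ be the equalizer of $g,h$, and factor $e=je'$. Then $f=(mj)e'$ with $mj$ a regular monomorphism by $(1)$, so minimality gives $m=(mj)s$; cancelling the monomorphism $m$ yields $js=\id$, whence the regular monomorphism $j$ is also a split epimorphism and therefore an isomorphism, giving $g=h$. For $(3)\Rightarrow(2)$, regular $\Rightarrow$ strong is the first fact, and conversely if $s$ is a strong monomorphism I factor $s=me$ with $e$ epic by $(3)$ and apply the fill-in property of $s$ to the square $s\cdot\id=m\cdot e$; this produces a left inverse of $e$, so the epimorphism $e$ is split monic and hence an isomorphism, exhibiting $s=me$ as regular. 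For $(2)\Rightarrow(1)$ I would use that strong monomorphisms are closed under composition (a direct two-step diagonal fill-in), so if they coincide with regular monomorphisms the latter compose as well.

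Finally, $(4)\Rightarrow(1)$ is immediate, since the right-hand class of any factorization system is closed under composition. For $(1)\Rightarrow(4)$ I would assemble the pieces: factorizations exist by $(3)$, which follows from $(1)$; the class of epimorphisms is automatically closed under composition and contains the isomorphisms; the class of regular monomorphisms is closed under composition by $(1)$ and contains the isomorphisms; and the orthogonality of epimorphisms to regular monomorphisms is precisely the first fact together with the uniqueness coming from regular monomorphisms being monic.

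I expect the main obstacle to be the minimality lemma and its use in $(1)\Rightarrow(3)$: making the cokernel-pair universal property interact correctly with the equalizer, and then correctly extracting that the equalizer $j$ of $g,h$ is an \emph{isomorphism} rather than merely a split monomorphism. A secondary point to watch is the limit hypothesis: the argument for $(1)\Rightarrow(3)$ uses the equalizer of a general pair $g,h$, which is available in the locally presentable categories that form the intended application but goes slightly beyond the bare ``cokernel pairs and equalizers of cokernel pairs'' assumption; I would either work in the complete ambient setting of interest or rephrase $j$ so that only equalizers guaranteed by the hypothesis are invoked.
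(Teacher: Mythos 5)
Your proposal is correct in substance but takes a genuinely different route from the paper: the paper simply cites Kelly (Propositions 2.7 and 3.8 of \cite{KELLY}) for the equivalence of (1), (2) and (3), and then disposes of (4) in one line ((4) $\Rightarrow$ (2) because the right class of a factorization system whose left class consists of epimorphisms is contained in the strong monomorphisms, and (1) together with (3) gives (4)). You instead reprove the Kelly part from scratch, via two well-chosen lemmas (regular $\Rightarrow$ strong, and the minimality of the canonical factorization through the equalizer of the cokernel pair) and the cycle $(1)\Rightarrow(3)\Rightarrow(2)\Rightarrow(1)$. What your approach buys is self-containedness and a clear picture of where each hypothesis is used; what the paper's approach buys is brevity and an exact match with the stated (minimal) limit hypotheses, since Kelly's propositions are formulated at that level of generality.

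The one point that needs repair is the one you flagged yourself: in $(1)\Rightarrow(3)$ you take the equalizer $j$ of an arbitrary pair $g,h$ with $ge=he$, which is not guaranteed by ``cokernel pairs and equalizers of cokernel pairs.'' The fix is exactly the rephrasing you anticipate: to show $e$ is an epimorphism it suffices to show that its cokernel pair $p,q$ satisfies $p=q$, so take $j$ to be the equalizer of $(p,q)$ (which the hypotheses do supply), factor $e=je'$, and run your minimality argument verbatim to conclude $j$ is a split epic regular monomorphism, hence an isomorphism, hence $p=q$. With that substitution the argument goes through under the stated hypotheses; the rest of your proof (including the assembly of (4) from (1) and (3), where orthogonality of epimorphisms against regular monomorphisms is your first lemma) is sound.
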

\begin{proof}
The equivalence of (1), (2) and (3) is Propositions 2.7 and 3.8 in \cite{KELLY}. Then clearly (4) implies (2), while (1) and (3) together imply (4).
\end{proof}
\begin{fact}[{\cite[Proposition 2.1.4]{FK}}]
{
\em
If $(\ce, \cm)$ is a factorization system in any category where $\ce$ is a class of epimorphisms then $\cm$ is left-cancellable. In particular, $\cm$ is coherent.
}
\end{fact}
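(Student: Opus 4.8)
The plan is to prove the sharper statement that $\cm$ is left-cancellable, meaning that whenever a composite $gf$ lies in $\cm$ the right-hand factor $f$ already lies in $\cm$ (with no hypothesis on $g$); coherence (A1) is then just the special case where one happens to know in addition that the left factor is in $\cm$. So I would fix a composite $gf\in\cm$ with $f\colon X\to Y$ and $g\colon Y\to Z$, and factor the single morphism $f$ through the factorization system as $f = me$ with $e\colon X\to W$ in $\ce$ and $m\colon W\to Y$ in $\cm$. The aim is to show that the $\ce$-part $e$ is forced to be an isomorphism, so that $f$ is, up to that isomorphism, nothing but the $\cm$-morphism $m$.

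The key step is a single orthogonality argument. Since $f = me$ we have $gf = (gm)e$, so the square with top edge $\id_X$, left edge $e$, bottom edge $gm\colon W\to Z$ and right edge $gf\colon X\to Z$ commutes. Because $e\in\ce$ and $gf\in\cm$, the defining lifting property of the factorization system yields a diagonal $d\colon W\to X$ with $d\,e=\id_X$ and $(gf)\,d = gm$. Only the first of these relations is needed: $d\,e=\id_X$ exhibits $e$ as a split monomorphism. It is precisely here that the hypothesis on $\ce$ enters, and I expect this to be the one point that actually carries the assumption rather than being routine: $e$ is now a split monomorphism that is simultaneously an epimorphism (being a member of $\ce$), and a morphism that is both split monic and epic is an isomorphism. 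For a general factorization system, where $\ce$ need not consist of epimorphisms, one would only obtain a retraction and the argument would break down.

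With $e$ invertible, $f = me$ is a composite of the isomorphism $e$ with $m\in\cm$; since $\cm$ contains all isomorphisms and is closed under composition, we conclude $f\in\cm$, which is left-cancellability. Coherence is then immediate: if $fg\in\cm$ and $f\in\cm$, then applying what we have just proved to the composite $fg$ gives its right factor $g\in\cm$, which is exactly (A1). Apart from the use of the epimorphism hypothesis to upgrade the retraction to an inverse, every step is a formal consequence of the factorization axioms, so no further obstacle is anticipated.
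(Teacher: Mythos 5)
Your argument is correct: factoring $f=me$, using the orthogonality $e\perp gf$ to split $e$, and then cancelling $e$ as an epimorphism to make it invertible is exactly the standard Freyd--Kelly-style proof of this cancellation property. The paper itself gives no proof, deferring to \cite[Proposition 2.1.4]{FK}, and your write-up is a faithful rendering of that argument, correctly isolating the one place where the hypothesis that $\ce$ consists of epimorphisms is used.
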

The following shows that $\cm$ being part of a factorization system in an accessible category brings it very close to being accessible itself (in the sense of Definition \ref{continuous-accessible}).
\begin{propo}
\label{accessibility-of-continuous-factorization}
{
\em
Suppose that $(\ce, \cm)$ is a factorization system in a $\lambda$-accessible category $\ck$ with $\cm$ a continuous class. Then the following are equivalent:
\begin{enumerate}
\item $\ck_\cm$ is $\lambda$-accessible,
\item $\cm$ is accessible (i.e.\ $\ck_\cm$ is accessible),
\item for any object $A$ in $\ck$ there is a set of $\ce$-images (up to isomorphism).
\end{enumerate}
In particular, the above conditions are fulfilled if $\ce$ is a class of epimorphisms and $\ck$ is co-wellpowered, e.g.\ when $\ck$ has pushouts or if there is a proper class of strongly compact cardinals.
}
\end{propo}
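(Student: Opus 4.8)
The plan is to reduce all three statements to the single condition that a certain class of objects is essentially small. Write $\ci$ for (a skeleton of) the class of $\ce$-images of the $\lambda$-presentable objects of $\ck$, i.e.\ those $B$ admitting a morphism $A \to B$ in $\ce$ with $A$ $\lambda$-presentable in $\ck$. Since $\ck$ is $\lambda$-accessible, its $\lambda$-presentable objects form a set up to isomorphism, so $\ci$ is essentially small exactly when each $\lambda$-presentable object has only a set of $\ce$-images. Before touching any of (1)--(3), I would prove two lemmas valid under the bare hypotheses of the statement.

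The first lemma is that every $B \in \ci$ is $\lambda$-presentable in $\ck_\cm$. Given $e : A \to B$ in $\ce$ with $A$ $\lambda$-presentable in $\ck$, and a morphism $g : B \to C$ in $\cm$ into a $\lambda$-directed colimit $C = \colim_k C_k$ in $\ck_\cm$ (computed in $\ck$ by continuity, with coprojections $c_k$ in $\cm$), I would factor $ge$ through some $C_k$ using $\lambda$-presentability of $A$ in $\ck$, re-factor the resulting arrow, and invoke uniqueness of $(\ce,\cm)$-factorizations to push $g$ itself through $C_k$ by a map in $\cm$; essential uniqueness of the factoring stage follows because the $c_k$ are monomorphisms. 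The second lemma is that every object $X$ of $\ck$ is a $\lambda$-directed colimit in $\ck_\cm$ of objects of $\ci$. For this I take a presentation $X = \colim_i A_i$ by $\lambda$-presentables in $\ck$, factor each leg $a_i = m_i e_i$ with $e_i \in \ce$, $m_i \in \cm$, obtain transition maps $b_{ij} : B_i \to B_j$ by the diagonal fill-in (these lie in $\cm$ by left-cancellability of $\cm$, since $m_j b_{ij} = m_i$ and $\cm$ consists of monomorphisms), and check that the colimit of the $e_i$ together with the induced map $\colim_i B_i \to X$ is an $(\ce,\cm)$-factorization of $\id_X$, which must be trivial. Here continuity guarantees the induced map is in $\cm$, and I use that $\ce$ is closed under directed colimits of arrows.

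With these lemmas the equivalences follow. Combined with continuity (which supplies $\lambda$-directed colimits in $\ck_\cm$), they show that $\ck_\cm$ is $\lambda$-accessible iff $\ci$ is essentially small, giving (1) $\iff$ [$\ci$ essentially small]. For (2) $\Rightarrow$ (1): if $\ck_\cm$ is $\mu$-accessible with $\mu \ge \lambda$, then each member of $\ci$ is $\lambda$-presentable hence $\mu$-presentable in $\ck_\cm$, so $\ci$ embeds in the set of $\mu$-presentable objects and is essentially small; and (1) $\Rightarrow$ (2) is immediate. For (3) $\Rightarrow$ (1), condition (3) applied to the set of $\lambda$-presentable objects of $\ck$ exhibits $\ci$ as a set-indexed union of sets. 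The delicate implication is (1) $\Rightarrow$ (3), and I expect it to be the main obstacle: from essential smallness of $\ci$ I must bound the $\ce$-images of an \emph{arbitrary} object $A$. The idea is to fix the small canonical $\lambda$-filtered slice $I = (\cp_\lambda \downarrow A)$ presenting $A$ and to observe, exactly as in the second lemma, that every $\ce$-image $B$ of $A$ arises as the colimit in $\ck_\cm$ of a functor $I \to \ck_\cm$ valued in $\ci$. Since $I$ is small, $\ci$ is a set of objects, and $\ck$ is locally small, there is only a set of such functors and hence only a set of possible colimits; thus the $\ce$-images of $A$ form a set. This counting step is what converts the generation property into genuine co-wellpoweredness of $\ce$ without any large-cardinal input.

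For the final clause I would note that when $\ce$ consists of epimorphisms the $\ce$-images of an object are among its quotients, so co-wellpoweredness of $\ck$ yields (3) at once; and $\ck$ is co-wellpowered in the two listed cases (when it has pushouts, via cokernel pairs together with accessibility, and when there is a proper class of strongly compact cardinals, by the standard fact that then every accessible category is co-wellpowered).
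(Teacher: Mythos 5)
Your proof is correct, and its two central lemmas coincide with the paper's two main ingredients: that $\ce$-images of $\lambda$-presentable objects are $\lambda$-presentable in $\ck_\cm$ (the paper extracts this from the proof of \cite[Proposition 1.69(i)]{AR}), and that every object of $\ck$ is a $\lambda$-directed colimit in $\ck_\cm$ of such images (the paper's (3)~$\Rightarrow$~(1), following \cite[Lemma 4.3]{LRV}). Where you genuinely diverge is in how condition (3) is reached. The paper proves the presentability lemma for an \emph{arbitrary} regular cardinal $\kappa$, which makes (2)~$\Rightarrow$~(3) immediate: choose $\kappa$ with $A$ $\kappa$-presentable in $\ck$, observe that every $\ce$-image of $A$ is then $\kappa$-presentable in the accessible category $\ck_\cm$, and use that an accessible, locally small category has only a set of $\kappa$-presentable objects up to isomorphism. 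You instead keep the lemma at $\lambda$ only and compensate with the functor-counting argument for (1)~$\Rightarrow$~(3): every $\ce$-image of $A$ is the colimit of a diagram on a fixed small index category valued in the essentially small class $\ci$, and there is only a set of such diagrams up to isomorphism. This works --- the transition maps are functorial by uniqueness of diagonal fill-ins, the induced map $\colim_i B_i \to B$ is an isomorphism by uniqueness of $(\ce,\cm)$-factorizations, and passing to an isomorphic diagram valued in a skeleton of $\ci$ does not change the colimit --- but it is more labour than the paper's route; what it buys is that presentability of images is never needed above rank $\lambda$. Two minor remarks: left-cancellability of $\cm$ holds for any orthogonal factorization system (factor the cancelled morphism and use uniqueness of factorizations), so you need not invoke that $\cm$ consists of monomorphisms there; and the appeal to the coprojections being monomorphisms in your first lemma is harmless, since the paper only ever applies the proposition to classes of monomorphisms.
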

\begin{proof}
Without assuming (1), (2) or (3) we can follow the proof of \cite[Proposition 1.69(i)]{AR}, to see that $\ce$-images of $\kappa$-presentable objects in $\ck$ are $\kappa$-presentable in $\ck_\cm$, for any $\kappa$.

We now use this to prove the equivalence of (1), (2) and (3). The implication (1) $\Rightarrow$ (2) is immediate. For (2) $\Rightarrow$ (3) we pick $\kappa$ such that $A$ is $\kappa$-presentable in $\ck$, so by the above any $\ce$-image of $A$ is $\kappa$-presentable in $\ck_\cm$. The result then follows because in an accessible category there is only a set of $\kappa$-presentable objects and the category is locally small. Finally, for (3) $\Rightarrow$ (1) we follow the proof strategy from \cite[Lemma 4.3]{LRV}. By continuity we have that $\ck_\cm$ has directed colimits. For an object $K$ from $\ck_\cm$ we write it as a $\lambda$-directed colimit of $\lambda$-presentable objects $(A_i)_{i \in I}$ in $\ck$. For each coprojection $a_i: A_i \to K$ we get an $(\ce, \cm)$-factorisation $a_i = a_i'' a_i'$, making $(a_i'': B_i \to K)_{i \in I}$ into a $\lambda$-directed colimit in $\ck_\cm$. By the above the $B_i$ are $\lambda$-presentable in $\ck_\cm$ and by assumption there is only a set of such objects. We thus conclude that $\ck_\cm$ is indeed $\lambda$-accessible.

The final sentence follows because accessible categories with pushouts are co-wellpowered \cite[Theorem 2.49]{AR}, and if there is a proper class of strongly compact cardinals any accessible category is co-wellpowered \cite[Proposition 6.2.2 and Theorem 6.3.8]{MP}.
\end{proof}
\begin{rem}
\label{regular-strongly-coherently-amalgamable}
{
\em
Let $\cm$ be the class of regular monomorphisms in a locally finitely presentable category. If $\cm$ is closed under composition then by Fact \ref{regular-factorization-system} we have that $(\ce, \cm)$ is a factorization system, where $\ce$ is the class of epimorphisms. In particular, if the pushout of two regular monomorphisms consists of regular monomorphisms then following Remark \ref{regular-monos-almost-squared} we have that $\cm$ is strongly coherently amalgamable, squared, continuous and accessible.
}
\end{rem}

\begin{defi}
{
\em
Given a factorization system $(\ce,\cm)$ in a category with pushouts, a commutative square below on the left will be called an $\ce$-\emph{pushout} if the induced morphism $P \to D$ from the pushout below on the right is in $\ce$.
$$
\xymatrix@=1pc{
        B \ar[r] & D \\
        A \ar[u] \ar[r] &
        C \ar[u]
      }
\quad
\xymatrix@=1pc{
        B \ar[r] & P \\
        A \ar[u] \ar[r] &
        C \ar[u]
      }
$$ 
}
\end{defi}

\begin{defi}\label{strongly-def}
{
\em
A strongly coherently amalgamable class $\cm$ of monomorphisms will be called \emph{strongly squared} if it is squared and, given a commutative diagram  
$$  
  \xymatrix@=1pc{
    M_1 \ar[r]^{} & M_3 \ar[r]^{}  & M_5 \\
    M_0 \ar [u]^{} \ar [r]_{} & M_2 \ar[u]_{} \ar[r]_{} & M_4 \ar[u]_{}
  }
  $$
of monomorphisms from $\cm$ where the left square is an $\ce$-pushout and outer rectangle a pullback, then the right square is a pullback. 

A \emph{strongly cubic} class of monomorphisms is a cubic class which is strongly squared.
}
\end{defi}

\begin{propo}\label{strongly}
A strongly coherently amalgamable and squared class $\cm$ of monomorphisms is strongly squared if and only if the independence relation given by pullback squares is base monotonic.
\end{propo}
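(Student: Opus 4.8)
The plan is to prove both implications by pasting an $\ce$-pushout square onto an independent (pullback) square, so that the two fit together into exactly the three-object configuration of Definition \ref{strongly-def}. Throughout I use that, by Proposition \ref{very}, pullback squares form a basic independence relation on $\ck_\cm$, and that joins of subobjects exist in $\ck_\cm$: for $\cm$-subobjects $A,B\le M$ one takes the pushout $P$ of $A\leftarrow A\cap B\to B$ (the intersection being a $\cm$-mono by (A3)), maps it to $M$, and forms the $(\ce,\cm)$-factorization $P\to A\vee B\to M$; coherence (A1) then guarantees that $A\to A\vee B$ and $B\to A\vee B$ again lie in $\cm$, so $A\vee B$ is genuinely the least $\cm$-subobject of $M$ above $A$ and $B$. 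The crucial bookkeeping fact is that an $\ce$-pushout square exhibits its top corner as precisely such a join.

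For the forward direction, suppose $\cm$ is strongly squared and take the data of base monotonicity: an independent square $A\nf_C^M D$ together with a factorization $C\to B\to D$. Form the pushout $P=A+_C B$; since $C\to A\to M$ and $C\to B\to D\to M$ agree (the original square commutes), there is an induced map $P\to M$, whose $(\ce,\cm)$-factorization I write $P\xrightarrow{e}A'\xrightarrow{m}M$. By construction the commutative square on $C\to A$, $C\to B$, $A\to A'$, $B\to A'$ is an $\ce$-pushout (its comparison map $P\to A'$ is exactly $e\in\ce$), and all its edges lie in $\cm$ by coherence. Pasting this to the left of the square $B\to D$, $B\to A'$, $A'\to M$, $D\to M$ produces a diagram as in Definition \ref{strongly-def} whose left square is the $\ce$-pushout and whose outer rectangle is the given pullback $A\nf_C^M D$ (the top composite $A\to A'\to M$ equals $A\to M$). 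The strongly squared axiom then forces the right square $A'\nf_B^M D$ to be a pullback, and taking $N=M$ with $A\to A'$, $A'\to M$ the evident maps, this is exactly base monotonicity.

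Conversely, suppose pullback squares are base monotonic and consider a diagram as in Definition \ref{strongly-def} with left square an $\ce$-pushout and outer rectangle a pullback. The $\ce$-pushout hypothesis says precisely that $M_3=M_1\vee M_2$ computed inside $M_5$: the induced map $M_1+_{M_0}M_2\to M_5$ has $(\ce,\cm)$-factorization $M_1+_{M_0}M_2\to M_3\to M_5$, so $M_3$ is the $\ce$-image, i.e.\ the join. The outer rectangle being a pullback is the statement $M_1\nf_{M_0}^{M_5}M_4$. Since joins of subobjects exist here, base monotonicity is equivalent to its subobject formulation (Remarks \ref{subobj-vs-commuting-squares} and \ref{joins-of-subobjects-make-things-easier}); applying it to $M_1\nf_{M_0}^{M_5}M_4$ along $M_0\le M_2\le M_4$ yields $M_1\nf_{M_2}^{M_5}M_4$. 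Because $M_1\vee M_2=M_3$ and $M_2\le M_4$, this is exactly the assertion that the right square $M_2\to M_3$, $M_2\to M_4$, $M_3\to M_5$, $M_4\to M_5$ is a pullback, as required.

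The only genuine subtlety lies in the converse, and it is worth flagging. The square-form of base monotonicity in Definition \ref{independence-relation} a priori produces the new independent square only inside some possibly larger object $N$, whereas the strongly squared axiom demands the conclusion inside the fixed $M_5$. This is precisely why I pass to the subobject formulation, which is legitimate because the factorization system forces $\ck_\cm$ to have joins of subobjects; this lets the ambient object be pinned to $M_5$, while the $\ce$-pushout hypothesis pins the relevant join to the prescribed object $M_3$. The remaining verifications—that every comparison map stays in $\cm$, and that the pasted outer rectangle really is the original pullback—are routine applications of coherence (A1) and the universal property of the pushout.
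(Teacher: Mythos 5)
Your proof is correct. The forward direction is essentially the paper's own argument: form the pushout of $A \leftarrow C \to B$, take the $(\ce,\cm)$-factorization of the induced map into $M$, note that the resulting square is an $\ce$-pushout with all edges in $\cm$ by left-cancellability, paste, and apply the strongly squared axiom with $N = M$. The converse is where you genuinely diverge. The paper stays entirely in the commuting-squares formalism: after applying base monotonicity (which a priori lands in a larger ambient object $M_5'$ with a new corner $M_1'$), it uses orthogonality to lift the $\ce$-map $P \to M_3$ against the $\cm$-map $M_1' \to M_5'$, obtaining a diagonal $M_3 \to M_1'$, and finishes with a pullback-cancellation argument. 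You instead pass to the subobject formulation: the factorization system gives $\ck_\cm$ joins of subobjects (your pushout-then-factorize construction of joins is correct and more elementary than the wide-pullback construction of Remark \ref{joins-of-subobjects-make-things-easier}), the $\ce$-pushout hypothesis pins $M_3$ down as the join $M_1 \vee M_2$ inside $M_5$, and subobject base monotonicity then yields the right square directly. Both arguments use the factorization system to force the conclusion back into the prescribed $M_3$ and $M_5$; yours does so by identifying $M_3$ as a join, the paper's by an explicit diagonal lift. The trade-off is that your route outsources work to the equivalences of Remark \ref{subobj-vs-commuting-squares}, which the paper deliberately leaves as ``straightforward but lengthy'' checks and states only for AECats with AP, whereas Proposition \ref{strongly} assumes neither continuity nor accessibility, so $\ck_\cm$ need not literally be an AECat there. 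The specific translation you need (squares base monotonicity implies the subobject version, plus the join reformulation) is purely formal and uses no directed colimits or accessibility, so nothing actually breaks, but a self-contained write-up would have to spell those checks out; the paper's direct lifting argument avoids that dependency entirely.
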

\begin{proof}
($\Rightarrow$) Let $\cm$ be a strongly squared class of monomorphisms. 
Consider the commutative diagram  
 $$  
  \xymatrix@=1pc{
    M_1 \ar[rr]^{} &  & M_4 \\
    M_0 \ar [u]^{} \ar [r]_{} & M_2 \ar[r]_{} & M_3 \ar[u]_{}
  }
  $$
of morphisms from $\cm$ such that the outer rectangle is a pullback. Form a pushout
$$
 \xymatrix@=1pc{
        M_1 \ar@{}\ar[r]^{} & M_1' \\
        M_0 \ar [u]^{} \ar [r]_{} &
        M_2 \ar[u]_{}
      }
      $$
We get a commutative diagram 
$$  
  \xymatrix@=1pc{
    M_1 \ar[r]^{} & M_1' \ar[r]^{t}  & M_4 \\
    M_0 \ar [u]^{} \ar [r]_{} & M_2 \ar[u]_{} \ar[r]_{} & M_3 \ar[u]_{}
  }
  $$
where $t$ is the induced morphism. Let 
$$
M_1' \xrightarrow{\ t_1\ } M_1'' \xrightarrow{\ t_2\ } M_4  
$$ 
be the $(\ce,\cm)$-factorization of $t$. In a commutative diagram
$$  
  \xymatrix@=1pc{
    M_1 \ar[r]^{} & M_1'' \ar[r]^{t_2}  & M_4 \\
    M_0 \ar [u]^{} \ar [r]_{} & M_2 \ar[u]_{} \ar[r]_{} & M_3 \ar[u]_{}
  }
  $$   
the left square is an $\ce$-pushout and the outer rectangle is a pullback.   
Since $\cm$ is left-cancellable, the diagram consists of monomorphisms
from $\cm$. Since $\cm$ is strongly squared, the right square is a pullback. Hence the independence given by pullbacks is base monotonic.

($\Leftarrow$) Let the independence given by pullbacks is base monotonic. Consider
a commutative diagram  
$$  
  \xymatrix@=1pc{
    M_1 \ar[r]^{} & M_3 \ar[r]^{}  & M_5 \\
    M_0 \ar [u]^{} \ar [r]_{} & M_2 \ar[u]_{} \ar[r]_{} & M_4 \ar[u]_{}
  }
  $$
of morphisms from $\cm$ where the left square is an $\ce$-pushout and outer rectangle a pullback. Applying base monotonicity we get a commutative diagram
  \[
  \xymatrix@=2pc{
    & M_1' \ar@{-->}[r]_{} & M_5' \\
    M_1 \ar@{-->}[ur]^{} \ar[rr]|>>>>>>>>>{} &  & M_5 \ar@{-->}[u]_{} \\
    M_0 \ar [u]^{} \ar [r]_{} & M_2 \ar[r]_{} \ar@{-->}[uu]|>>>>>>{} & M_4 \ar[u]_{}
  }
  \]
of morphisms in $\cm$ where the right rectangle $(M_2,M_1',M_4,M_5')$ is a pullback. Let
$$
\xymatrix@=1pc{
        M_1 \ar@{}\ar[r]^{} & P \\
        M_0 \ar [u]^{} \ar [r]_{} &
        M_2 \ar[u]_{}
      }
      $$
be a pushout and $P\to M_3$ and $P\to M_1'$ be the induced morphisms. By the uniqueness of induced morphisms we have that
$$
\xymatrix@=1pc{
        M_1' \ar@{}\ar[r]^{} & M_5' \\
        P \ar [u]^{} \ar [r]_{} &
        M_3 \ar[u]_{}
      }
      $$
commutes. Since $P \to M_3$ is in $\ce$ and $M'_1 \to M'_5$ is in $\cm$, there is the diagonal $M_3\to M_1'$ such that both triangles commute. By left-cancellability we have that the diagonal $M_3 \to M_1'$ is in $\cm$. Chasing diagrams and using the fact that $M_1' \to M_5'$ is a monomorphism we see that $M_2 \to M_3 \to M_1'$ is the same as our original morphism $M_2 \to M_1'$. Since
$$
\xymatrix@=1pc{
M_1' \ar [r]^{}  & M_5' \\
M_3 \ar[r]^{} \ar [u]^{}  & M_5 \ar [u]_{}\\
M_2 \ar [r]{} \ar [u]^{} & M_4 \ar [u]_{}
}
$$ 
commutes, $M_1' \to M_3$ is a monomorphism and the outer rectangle is a pullback we have that the lower square is a pullback. Hence $\cm$ is strongly squared.
\end{proof}

\begin{repeated-theorem}[Theorem \ref{simple}]
Suppose that $\ck$ is a locally finitely presentable category equipped with a strongly cubic and continuous class $\cm$ of monomorphisms. Then $\ck_\cm$ has a simple independence relation given by pullback squares.
\end{repeated-theorem}
\begin{proof}
It follows from Theorem \ref{nsop} and Propositions \ref{strongly} and \ref{accessibility-of-continuous-factorization}.
\end{proof} 

\begin{coro}\label{effective}
Let $\ck$ be a locally finitely presentable coregular category. Then the following are equivalent:
\begin{enumerate}
\item $\ck$ has effective unions (i.e.\ pullback squares are effective),
\item regular monomorphisms are strongly cubic and cofibrantly generated,
\item regular monomorphisms are cubic and cofibrantly generated.
\end{enumerate}
\end{coro}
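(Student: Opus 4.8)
The plan is to prove the three-way equivalence by establishing a cycle of implications, exploiting the machinery already developed for effective unions and cellular squares in \cite{LRV, LRV1} together with the characterization of base monotonicity via strongly squared classes in Proposition \ref{strongly}. Throughout, $\cm$ denotes the class of regular monomorphisms in the locally finitely presentable coregular category $\ck$. Since $\ck$ is coregular, Remark \ref{regular-monos-almost-squared} already tells us that $\cm$ is squared, continuous and accessible, and Remark \ref{regular-strongly-coherently-amalgamable} gives that $\cm$ is strongly coherently amalgamable. So the substance of each condition lies in the interplay between being cubic/strongly cubic and being (cofibrantly) generated, with effective unions sitting as the strongest hypothesis.

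First I would prove $(1) \Rightarrow (2)$. Assuming $\ck$ has effective unions, the work of \cite{LRV1} shows that pullback squares of regular monomorphisms form a \emph{stable} independence relation; in particular this independence is base monotonic (stable implies simple implies base monotonic by Definition \ref{independence-hierarchy}). By Proposition \ref{strongly}, base monotonicity of the pullback-square independence is exactly equivalent to $\cm$ being strongly squared, so $\cm$ is strongly squared. It remains to see $\cm$ is cubic: here I would invoke Proposition \ref{3-amalg-from-uniqueness}, since stable independence satisfies uniqueness and hence $3$-amalgamation, and then check (via the limit characterization discussed in the remark following Proposition \ref{cubic1}) that effective unions force the colimit of a horn to have all edges in $\cm$ with the diagonal square a pullback. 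Being strongly squared and cubic is precisely strongly cubic. Cofibrant generation in the sense of \cite{LRV1} is then recovered from effectiveness by the same cellular-square argument used there. The implication $(2) \Rightarrow (3)$ is trivial, since strongly cubic is by Definition \ref{strongly-def} a strengthening of cubic.

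The harder direction, which I expect to be the main obstacle, is $(3) \Rightarrow (1)$: recovering effective unions from cubicity plus cofibrant generation. The plan is to use Theorem \ref{nsop} to conclude that pullback squares give an $\NSOP_1$-like independence relation on $\ck_\cm$, and then to upgrade this to a stable independence relation. Cofibrant generation is what should supply the missing \emph{uniqueness} property: the point of \cite{LRV1} is that cofibrant generation of $\cm$ corresponds to the pushout being recoverable inside the independent squares, which is exactly the categorical content of uniqueness. Once uniqueness holds, the independence is stable by Definition \ref{independence-hierarchy}, and stability of the regular-monomorphism pullback independence in a coregular category is equivalent to effective unions by the main construction of \cite{LRV, LRV1}. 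The delicate step is verifying that cofibrant generation genuinely yields uniqueness here rather than merely existence/transitivity, and that the resulting unique amalgam is computed by a pushout whose comparison map is a regular monomorphism---i.e.\ that the square is $\cm$-effective in the sense of Definition \ref{m-effective-square}. I would isolate this as a lemma extracting uniqueness from cofibrant generation, citing the corresponding argument in \cite{LRV1}, and then close the cycle by identifying effective unions with the statement that every pullback square is $\cm$-effective.
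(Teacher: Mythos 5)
Your implications $(1)\Rightarrow(2)$ and $(2)\Rightarrow(3)$ essentially match the paper's proof (the paper gets cofibrant generation from effective unions by citing Beke's Proposition 1.12, and obtains cubicity from $3$-amalgamation via Proposition \ref{3-amalg-from-uniqueness} and strong squaredness from base monotonicity via Proposition \ref{strongly}, exactly as you outline). The problem is in $(3)\Rightarrow(1)$, where you propose to extract the \emph{uniqueness} property for the pullback-square independence relation directly from cofibrant generation. This is the gap: what cofibrant generation gives you, via \cite[Theorem 3.1]{LRV1}, is that the $\cm$-\emph{effective} squares form a stable independence relation. For effective squares uniqueness is indeed ``the pushout being recoverable inside the independent square,'' because the comparison map out of the pushout is part of their definition. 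But there is no corresponding argument that the \emph{pullback} squares satisfy uniqueness; transferring stability from the effective squares to the pullback squares requires knowing that the two classes of squares coincide, and that is precisely the statement ``pullback squares are effective'' that you are trying to prove. As written, the lemma you want to isolate is circular.

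The paper closes this loop differently, and this is the one idea your proposal is missing: \emph{canonicity}. Both the pullback squares (NSOP$_1$-like, by Theorem \ref{nsop}) and the effective squares (stable, hence in particular NSOP$_1$-like, by \cite[Theorem 3.1]{LRV1}) are NSOP$_1$-like independence relations on $\ck_\cm$. Since $\ck_\cm$ satisfies the existence axiom for isi-forking (Corollary \ref{existence-axiom-presentable}), Theorem \ref{canonicity} forces the two relations to coincide, so every pullback square is effective. No direct verification of uniqueness for pullback squares is needed; it is inherited for free once the identification is made. If you want to repair your argument without canonicity you would have to show by hand that for a cofibrantly generated squared class the effective squares and the pullback squares agree, which is not something the cellular-square machinery of \cite{LRV1} hands you.
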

\begin{proof}
(1) $\implies$ (2). Let $\ck$ be a locally finitely presentable coregular category with effective unions. Following \cite[Proposition 1.12]{B}, regular monomorphisms are cofibrantly generated. Since pullback squares are effective, they form a weakly stable independence which satisfies $3$-amalgamation by Proposition \ref{3-amalg-from-uniqueness}. It follows regular monomorphisms are cubic: 3-amalgamation guarantees a suitable cocone for every horn and since $\cm$ is left-cancellable and by general properties of pullbacks, the colimiting cocone satisfies all required properties. Since weakly stable independence is base monotonic, regular monomorphisms are strongly cubic by Proposition \ref{strongly}.

(2) $\implies$ (3). Trivial.

(3) $\implies$ (1). Assume that regular monomorphisms are cubic and cofibrantly generated. Following Remark \ref{regular-monos-almost-squared} we can apply Theorem \ref{nsop} to see that pullback squares form an NSOP$_1$-like independence relation. Following \cite[Theorem 3.1]{LRV1}, effective pullback squares form a stable independence relation. By canonicity (Corollary \ref{canonicity-in-main-thms}) we have that the two coincide. Hence pullback squares are effective.
\end{proof}

\begin{rem}\label{nsop2}
{
\em
Like in Theorem \ref{nsop}, we could only assume that $\ck$ is locally presentable and directed colimits of morphisms from $\cm$ commute with pullbacks. 
}
\end{rem}

\begin{exam}
\label{graphs-strongly-cubic}
{
\em
The class $\cm$ of embeddings in $\Gra$ is strongly cubic (recall from Example \ref{graphs} that it is cubic), corresponding to the well-known fact that the theory of the random graph is simple unstable \cite[Remark 8.2.4]{TZ}. The factorization system $(\ce,\cm)$ has $\ce$ consisting of surjections (on vertices). Consider a commutative diagram of embeddings
$$  
  \xymatrix@=1pc{
    M_1 \ar[r]^{} & M_3 \ar[r]^{}  & M_5 \\
    M_0 \ar [u]^{} \ar [r]_{} & M_2 \ar[u]_{} \ar[r]_{} & M_4 \ar[u]_{}
  }
  $$
where the left square is an $\ce$-pushout and outer rectangle a pullback.
Take two pushouts
$$  
  \xymatrix@=1pc{
    M_1 \ar[r]^{} & P \ar[r]^{}  & Q \\
    M_0 \ar [u]^{} \ar [r]_{} & M_2 \ar[u]_{} \ar[r]_{} & M_4 \ar[u]_{}
  }
  $$
Since the forgetful functor $\Gra\to\Set$ preserves both pushouts and pullbacks and $\Set$ has effective unions, the induced morphism $Q\to M_5$
is injective. Thus the induced morphism $P\to M_3$ is injective on vertices, hence bijective on vertices (because it is in $\ce$).

Just considering sets of vertices, and assuming the arrows are genuine embeddings, we thus have $M_3 = M_1 \cup M_2$ (by the above) and so $M_3 \cap M_4 = (M_1 \cup M_2) \cap M_4 = (M_1 \cap M_4) \cup (M_2 \cap M_4) = M_0 \cup M_2 = M_2$. We conclude that the right square $(M_2,M_3,M_4,M_5)$ is a pullback.
}
\end{exam}

\begin{exam}
{
\em
We consider the category of binary functions $\BinFunc$ from Example \ref{binary-functions}, with $\cm$ the class of regular monomorphisms. There is a factorization system $(\ce, \cm)$ where $\ce$ consists of epimorphisms (i.e., both $u$ and $v$ are surjective). So $\cm$ is strongly coherently amalgamable and it is cubic. We will show that it is not strongly squared, corresponding to the fact that the theory of the generic binary function is NSOP$_1$ but not simple \cite[Proposition 3.14]{KR}. Let $X = \{ a, b \}$ and $Y = \{y, z\}$ and consider the function $f: X \times X \to Y$ given by $f(a, a) = y$ and $f(b, b) = f(a, b) = f(b, a) = z$. For $0 \leq i \leq 4$ consider the subfunctions $f_i: X_i \times X_i \to Y_i$ given by:
\begin{align*}
&X_0 = \emptyset, &Y_0 = \{y\}, \\
&X_1 = \{a\}, &Y_1 = \{y\}, \\
&X_2 = \{b\}, &Y_2 = \{z\}, \\
&X_3 = \{a, b\}, &Y_3 = \{y, z\}, \\
&X_4 = \{b\}, &Y_4 = \{y, z\}.
\end{align*}
The inclusions then fit in a commutative diagram as below:
$$  
  \xymatrix@=1pc{
    f_1 \ar[r]^{} & f_3 \ar[r]^{}  & f \\
    f_0 \ar [u]^{} \ar [r]_{} & f_2 \ar[u]_{} \ar[r]_{} & f_4 \ar[u]_{}
  }
$$
Then the outer rectangle is a pullback and in the left square $f_3$ is the epimorphic image of the pushout $f_1 \leftarrow f_0 \to f_2$. However, the right square is not a pullback.
}
\end{exam}

\begin{exams}\label{posets}
{
\em
We continue Example \ref{not-cubic} of locally finitely presentable categories with $\cm$ the class of regular monomorphisms, such that $\cm$ is continuous, accessible and squared but not cubic. Following Fact \ref{regular-factorization-system} $\cm$ is in fact part of a factorization system $(\ce, \cm)$ where $\ce$ is the class of epimorphisms. In this example we will show that being strongly squared is independent from these earlier properties.
\begin{enumerate}
\item The class $\cm$ of embeddings of $\Pos$ is strongly squared. Epimorphisms are precisely the surjections. Since a pushout with $f,g \in \cm$
$$
 \xymatrix@=1pc{
        B \ar@{}\ar[r]^{\bar{f}} & D \\
        A \ar [u]^{g} \ar [r]_{f} &
        C \ar[u]_{\bar{g}}
      }
      $$  
is a pullback and $\bar{f},\bar{g}$ are jointly surjective, the forgetful functor $\Pos\to\Set$ preserves pushouts of embeddings. Consider
$$  
  \xymatrix@=1pc{
    M_1 \ar[r]^{} & M_3 \ar[r]^{}  & M_5 \\
    M_0 \ar [u]^{} \ar [r]_{} & M_2 \ar[u]_{} \ar[r]_{} & M_4 \ar[u]_{}
  }
  $$
where the left square is an $\ce$-pushout and the outer rectangle is a pullback. We will show that the right square is a pullback.

Take $a\in M_3\cap M_4$. Let 
$$
 \xymatrix@=1pc{
        M_1 \ar@{}\ar[r]^{} & P \\
        M_0 \ar [u]^{} \ar [r]_{} &
        M_2 \ar[u]_{}
      }
      $$  
be a pushout and $p:P\to M_3$ the induced morphism. There is $a'\in P$ such that $p(a')=a$. If $a'\in M_2$ then $a\in M_2$. If $a'\in M_1$ then $a\in M_0$, hence $a\in M_2$.

\item Embeddings in $\Grp$ are not strongly squared. Consider
$$  
  \xymatrix@=1pc{
    \langle a\rangle \ar[r]^{} & \langle a,b\rangle \ar[r]^{}  & \langle a,b\rangle \\
    0 \ar [u]^{} \ar [r]_{} & \langle b\rangle \ar[u]_{} \ar[r]_{} & \langle b,aba\rangle \ar[u]_{}
  }
  $$
Then the left square is a pushout, the outer rectangle is a pullback but the right square is not a pullback.
\end{enumerate}
}
\end{exams}

\section{Independence in locally multipresentable categories}
We recall the definition and equivalent characterisations of locally multipresentable categories (see e.g.\ \cite[Definition 4.28 and Theorem 4.30]{AR}).
\begin{defi}
{
\em
A \emph{multicolimit} of a diagram $D$ is a (possibly empty) set of cocones, such that every cocone factors uniquely through a unique cocone in that set. We call a $\lambda$-accessible category $\ck$ \emph{locally $\lambda$-multipresentable} if the following equivalent conditions hold:
\begin{enumerate}
\item $\ck$ has all small multicolimits,
\item $\ck$ has all small connected limits.
\end{enumerate}
We call $\ck$ \emph{locally multipresentable} if it is locally $\lambda$-multipresentable for some $\lambda$.
}
\end{defi}
\begin{lemma}\label{lmp1}
Let $\cm$ be a strongly coherently amalgamable continuous class of morphisms in a locally $\lambda$-presentable category $\ck$. Then $\ck_{\cm}$ is a locally $\lambda$-multipresentable category with AP.
\end{lemma}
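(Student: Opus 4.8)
The plan is to check the three ingredients separately: the amalgamation property, $\lambda$-accessibility of $\ck_\cm$, and the existence of all small connected limits in $\ck_\cm$; by the characterisation of local $\lambda$-multipresentability recalled above, the last two together give the conclusion. The amalgamation property is immediate: a strongly coherently amalgamable class is in particular coherently amalgamable, and it was already observed that $\ck_\cm$ then has AP (complete a span by its pushout, whose legs lie in $\cm$ by (A2)).

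For $\lambda$-accessibility I would simply invoke Proposition \ref{accessibility-of-continuous-factorization}. Its hypotheses hold: $(\ce,\cm)$ is a factorization system with $\cm$ continuous on the $\lambda$-accessible category $\ck$, and $\ce$ consists of epimorphisms. Moreover $\ck$, being locally $\lambda$-presentable, is cocomplete and hence has pushouts, so it is co-wellpowered; the ``in particular'' clause of Proposition \ref{accessibility-of-continuous-factorization} then yields that $\ck_\cm$ is $\lambda$-accessible.

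The heart of the argument is connected limits, and the key claim is that $\ck_\cm$ is closed in $\ck$ under them. So let $\ci$ be a small connected category and $D\colon\ci\to\ck_\cm$ a diagram; form its limit $L=\lim_\ci D$ with projections $\pi_i\colon L\to D(i)$ in the (complete) category $\ck$. The crucial point is that every $\pi_{i_0}$ lies in $\cm$, which I would prove by verifying the right lifting property of $\pi_{i_0}$ against an arbitrary $e'\colon X\to Y$ in $\ce$. Given a commuting square with $u\colon X\to L$, $v\colon Y\to D(i_0)$ and $\pi_{i_0}u=ve'$, the required diagonal $Y\to L$ is obtained by producing a cone $(d_i\colon Y\to D(i))_{i\in\ci}$ with $d_{i_0}=v$ and $d_ie'=\pi_iu$ for all $i$. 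Each $d_i$ is unique if it exists, since $e'$ is an epimorphism; and existence propagates through the diagram: along a forward arrow $f\colon i\to j$ one sets $d_j=D(f)d_i$, while along a backward arrow $g\colon j\to i$ one obtains $d_j$ from the orthogonality $e'\perp D(g)$ (valid because $D(g)\in\cm$). Connectedness of $\ci$ then carries the existence of $d_{i_0}=v$ to every object, and the cone condition $D(f)d_i=d_j$ follows again from $e'$ being epic. Assembling the $d_i$ yields the diagonal, so $\pi_{i_0}\in\cm$.

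It then remains to see that $(L,(\pi_i))$ is the limit in $\ck_\cm$, not merely a cone therein: any cone $(c_i\colon C\to D(i))$ in $\ck_\cm$ induces a unique $c\colon C\to L$ in $\ck$, and since $\pi_{i_0}c=c_{i_0}$ with both $\pi_{i_0}$ and $c_{i_0}$ in $\cm$, coherence (A1) forces $c\in\cm$. Thus $\ck_\cm$ has all connected limits, computed as in $\ck$, and together with $\lambda$-accessibility this makes $\ck_\cm$ locally $\lambda$-multipresentable. I expect the main obstacle to be exactly the membership $\pi_{i_0}\in\cm$: the propagation step is where connectedness is indispensable, since for a non-connected index category (e.g.\ a discrete two-object diagram, giving a product) the existence of the $d_i$ on a component not containing $i_0$ cannot be forced, matching the fact that locally multipresentable categories need not have products.
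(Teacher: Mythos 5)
Your proposal is correct, and its overall skeleton (AP from (A2); $\lambda$-accessibility via Proposition \ref{accessibility-of-continuous-factorization} together with co-wellpoweredness from pushouts; connected limits from the proper factorization system) matches the paper's. The one place where you take a genuinely different route is the connected-limits step. The paper disposes of it in one line by invoking the standard decomposition of connected limits into equalizers and wide pullbacks, and then checking those two special cases against the factorization system (e.g.\ wide pullbacks of $\cm$-maps are $\cm$-intersections, and the equalizer of a parallel pair is forced into $\cm$ by factoring it as $\ce$ followed by $\cm$ and cancelling). You instead prove directly, for an arbitrary small connected diagram $D$ in $\ck_\cm$, that every projection $\pi_{i_0}$ of the $\ck$-limit is orthogonal to every $e'\in\ce$, by propagating the partial diagonal $(d_i)$ along a zigzag from $i_0$ using $e'\perp D(g)$ for backward arrows and epi-ness of $e'$ for uniqueness and the cone condition; membership $\pi_{i_0}\in\cm=\ce^{\perp}$ then follows, and coherence (A1) upgrades the $\ck$-limit to a limit in $\ck_\cm$. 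Your argument is longer but uniform and self-contained (it does not rely on the equalizers-plus-wide-pullbacks generation of connected limits), and it isolates exactly where connectedness enters, which the paper's terse proof leaves implicit; the paper's reduction buys brevity at the cost of checking two separate limit shapes. Both are complete proofs.
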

\begin{proof}
By Proposition \ref{accessibility-of-continuous-factorization} we have that $\ck_\cm$ is $\lambda$-accessible. To show that $\ck_\cm$ has connected limits it suffices to show it has equalizers and wide pullbacks, which easily follows from $(\ce,\cm)$ being a proper factorization system (meaning that $\ce$ consists of epimorphisms). 
\end{proof}
\begin{rem}\label{gen}
{
\em
In the context Lemma \ref{lmp1} we have that the multipushout of a span $B \leftarrow A \to C$ consists of all its $\ce$-pushouts.
}
\end{rem}
We adjust the definitions of coherently amalgamable, squared and cubic to work with multicolimits.
\begin{defi}\label{multi-coherently-amalgamable-squared}
{
\em
Let $\cm$ be a class of monomorphisms in a locally multipresentable category $\ck$ that is closed under composition and contains all isomorphisms. We call \emph{coherently multiamalgamable} if it satisfies the following axioms:
\begin{itemize}
\item[(A1)] if $fg \in \cm$ and $f \in \cm$ then $g \in \cm$ (this is sometimes called \emph{coherence}),
\item[(A2')] the multipushout of two morphisms in $\cm$ contains at least one square that is entirely in $\cm$.
\end{itemize}
We call $\cm$ \emph{multisquared} if it is coherently multiamalgamable and also satisfies:
\begin{itemize}
\item[(A3)] the pullback of two morphisms in $\cm$ is in $\cm$,
\item[(A4')] the multipushout of two morphisms in $\cm$ contains at least one square that is entirely in $\cm$ and which is also a pullback.
\end{itemize}
Finally, we call $\cm$ \emph{multicubic} if it is multisquared and the multicolimit of the horn in Definition \ref{cubic} contains a cocone that is entirely in $\cm$ and where the diagonal square is a pullback.
}
\end{defi}
\begin{rem}
{
\em
We can generalize Remark \ref{aec}(2) to the following: if $\cm$ is a coherently multiamalgamable, continuous and accessible class of monomorphisms then $\ck_\cm$ is an AECat with AP. The weakening of (A2) to (A2') still implies AP.
}
\end{rem}

The following theorem generalizes \ref{nsop}.
\begin{repeated-theorem}[Theorem \ref{lmp-nsop}]
Suppose that $\ck$ is a locally finitely multipresentable category equipped with a multicubic, continuous and accessible class $\cm$ of monomorphisms. Then $\ck_\cm$ has an $\NSOP_1$-like independence relation given by pullback squares.
\end{repeated-theorem}
\begin{proof}
We follow the proof of Theorem \ref{nsop}. In the proof of accessibility of $\ck_{\nf}$ we use the commutation of directed colimits with pullbacks (see \cite[Theorem 4.30]{AR}).
\end{proof}

\begin{exam}
\label{locally-multi-presentable-examples}
{
\em
Let $K$ be a set and $\ck$ the category of functions $d:X\times X\to K$. Hence $\ck$ is the category of sets equipped with a $K$-valued distance $d$. Morphisms $(X_1,d_1)\to (X_2,d_2)$ are distance preserving mappings, i.e., mappings $u:X_1\to X_2$ such that $d_2(u(x),u(y))=d_1(x,y)$ for every $x,y\in X$.

The category $\ck$ is accessible because it is the inserter category $\Ins(-^2,C_K)$ where $C_K$ is the constant functor with the value $K$ (see \cite[Theorem 2.72]{AR}). Since $C_K$ preserves all non-empty limits, it preserves connected limits and thus $\ck$ has connected limits (cf.\ \cite[Exercise 2j(1)]{AR}). Thus $\ck$ is locally multipresentable. In fact, it is easy to see that $\ck$ is locally finitely multipresentable where finitely presentable objects are finite sets with a $K$-valued distance. Let $\cm$ be the class of regular monomorphisms which coincide with injective morphisms.

Multipushouts are pushouts of underlying sets with all possible choices of cross-distances. Hence every instance of a multipushout of two regular monomorphisms is a pullback, so $\cm$ is multisquared. Similarly, we show that $\cm$ is multicubic. Following Theorem \ref{lmp-nsop}, $\ck_\cm$ has an $\NSOP_1$-like independence given by pullbacks.

We note that if the distance function is a metric then distance preserving maps no longer form multicubic class. For example, take $K = \mathbb{R}$ and consider the category of metric spaces with isometries (i.e., distance preserving maps). This is a locally finitely multipresentable category (see e.g.\ \cite[Theorem 5.9]{LRV-univ}). In the notation of Definition \ref{cubic}, consider the horn consisting of embeddings between $M = \emptyset$, $A = \{a\}$, $B = \{b\}$, $C = \{c\}$, $N_1 = \{a, b\}$, $N_2 = \{a, c\}$ and $N_3 = \{b, c\}$ with distance functions such that $d(a, b) = d(a, c) = 1$ and $d(b, c) = 3$. Due to the triangle inequality, there cannot be a cocone $N$ of this horn, as the map $N_3 \to N$ cannot preserve the distance.
}
\end{exam}

\begin{defi}
{
\em
Given a factorization system $(\ce, \cm)$ in a category with multipushouts, a commutative square below on the left will be called an \emph{$\ce$-multipushout} if the induced morphism $P \to D$ from an instance of the multipushout below on the right is in $\ce$.
$$
 \xymatrix@=1pc{
        B \ar[r] & D \\
        A \ar [u] \ar [r] &
        C \ar[u]
      }
\quad
 \xymatrix@=1pc{
        B \ar[r] & P \\
        A \ar[u] \ar[r] &
        C \ar[u]
      }
$$
}
\end{defi}
\begin{defi}
{
\em
A strongly coherently multiamalgamable class $\cm$ of monomorphisms will be called \emph{strongly multisquared} if it is multisquared and, given a commutative diagram  
$$  
  \xymatrix@=1pc{
    M_1 \ar[r]^{} & M_3 \ar[r]^{}  & M_5 \\
    M_0 \ar [u]^{} \ar [r]_{} & M_2 \ar[u]_{} \ar[r]_{} & M_4 \ar[u]_{}
  }
  $$
of monomorphisms from $\cm$ where the left square is an $\ce$-multipushout and outer rectangle a pullback, then the right square is a pullback. 

A \emph{strongly multicubic} class of monomorphisms is a multicubic class which is strongly multisquared.
}
\end{defi}
\begin{propo}
\label{lmp-strongly}
A strongly coherently multiamalgamable and multisquared class $\cm$ of monomorphisms is strongly multisquared if and only if the independence relation given by pullback squares is base monotonic.
\end{propo}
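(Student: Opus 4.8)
The plan is to transcribe the proof of Proposition \ref{strongly} almost verbatim, replacing the single pushout of $M_1 \leftarrow M_0 \to M_2$ everywhere by a suitably chosen \emph{instance} of the corresponding multipushout. I would work throughout in $\ck_\cm$: by Lemma \ref{lmp1} this is locally multipresentable, and by Remark \ref{gen} the multipushout of a span of $\cm$-morphisms consists precisely of its $\ce$-pushouts, so every instance has both legs in $\cm$ and each instance visibly makes its left square an $\ce$-multipushout. The one elementary fact I would isolate and reuse is that in a multicolimit, if $X$ and $Y$ are cocones admitting a cocone-morphism $f\colon X \to Y$, then they factor through the \emph{same} instance: if $P \to X$ is the structure map, then $f$ composed with it exhibits $Y$ as factoring through $P$, and uniqueness of the instance forces the instance of $Y$ to be $P$.

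For ($\Rightarrow$) I would assume $\cm$ strongly multisquared and begin, as in Proposition \ref{strongly}, with a commuting diagram whose outer rectangle over the base $M_0$ (with the factorisation $M_0 \to M_2 \to M_3$ of one leg) is a pullback. The cocone $(M_1 \to M_4,\, M_2 \to M_4)$ lives in $\ck_\cm$, so it factors through a unique instance $P$, producing an induced $P \to M_4$ in $\cm$ and legs $M_1 \to P$, $M_2 \to P$ in $\cm$; the square $(M_0,M_1,M_2,P)$ is then an $\ce$-multipushout. This is exactly the input of the strongly multisquared axiom (left square an $\ce$-multipushout, outer rectangle a pullback), so that axiom yields that the right square $(M_2,P,M_3,M_4)$ is a pullback, i.e.\ $P \nf_{M_2}^{M_4} M_3$ with the required compatible maps from $M_1$. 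This is precisely base monotonicity.

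For ($\Leftarrow$) I would assume base monotonicity and start from a diagram whose left square is an $\ce$-multipushout and whose outer rectangle is a pullback, aiming to show the right square is a pullback. Applying base monotonicity to the outer rectangle (with the factorisation $M_0 \to M_2 \to M_4$) yields $M_1'$ and a cocone $M_5'$ with $(M_2,M_1',M_4,M_5')$ a pullback and compatible maps $M_1 \to M_1'$, $M_5 \to M_5'$. Here lies the main obstacle and the only genuinely new point: unlike the single-pushout setting, $M_3$ and $M_1'$ need not a priori factor through the same multipushout instance. I would resolve this by noting that both $M_3$ and $M_1'$ carry cocone-morphisms into the common cocone $M_5'$ (via $M_3 \to M_5 \to M_5'$ and $M_1' \to M_5'$, which a short diagram chase shows agree on $M_1$ and $M_2$), so by the elementary fact above they factor through one common instance $P$. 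With this $P$ the argument of Proposition \ref{strongly} then transfers unchanged: the square $(P,M_1',M_3,M_5')$ commutes by uniqueness of the structure map into $M_5'$; the lifting property of $(\ce,\cm)$ against $P \to M_3 \in \ce$ and $M_1' \to M_5' \in \cm$ gives a diagonal $M_3 \to M_1'$, which lies in $\cm$ by coherence; and pullback cancellation (using that $M_3 \to M_1'$ is a monomorphism and that $(M_2,M_1',M_4,M_5')$ is a pullback) shows the right square $(M_2,M_3,M_4,M_5)$ is a pullback. Hence $\cm$ is strongly multisquared.
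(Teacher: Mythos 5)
Your proposal is correct and follows essentially the same route as the paper: transcribe the proof of Proposition \ref{strongly}, replacing pushouts by suitable multipushout instances, and in the ($\Leftarrow$) direction observe that $M_3$ and $M_1'$ factor through a \emph{common} instance $P$ because both are amalgamated by $M_5'$ --- this is exactly the one new step the paper isolates. A small caveat on ($\Rightarrow$): Lemma \ref{lmp1} and Remark \ref{gen} are stated for a locally \emph{presentable} ambient category, whereas here $\ck$ is only locally multipresentable, so you should either verify the analogue of Remark \ref{gen} (with $\ce$-multipushouts) or, as the paper does, take the $\ck$-instance of the multipushout admitting an induced morphism $t$ to $M_4$ and insert the $(\ce,\cm)$-factorization of $t$: the induced map from the instance itself need not lie in $\cm$, and it is only after this factorization (plus coherence for the two legs) that the hypotheses of the strongly multisquared axiom are met.
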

\begin{proof}
We largely follow the proof of Proposition \ref{strongly} and sketch where some minor modifications need to be made.

($\Rightarrow$) Instead of taking a pushout, we let $M_1'$ be the instance of the multipushout that admits a morphism $t: M_1' \to M_4$. The remainder of the proof is then as written.

($\Leftarrow$)
Instead of taking the pushout
$$
\xymatrix@=1pc{
        M_1 \ar[r] & P \\
        M_0 \ar[u] \ar[r] &
        M_2 \ar[u]
      }
      $$
we take the instance of the multipushout that admits a morphism $P \to M_3$. Since the squares     
\[
\vcenter{\vbox{
\xymatrix@=1pc{
    M_1 \ar[r] & M_3 \\
    M_0 \ar[u] \ar[r] &
    M_2 \ar[u]
}
}}
\quad
\text{and}
\quad
\vcenter{\vbox{
\xymatrix@=1pc{
    M_1 \ar[r] & M_1' \\
    M_0 \ar[u] \ar[r] &
    M_2 \ar[u]
}
}}
\]
are amalgated by $M_5'$, there is also $P \to M_1'$. The remainder of the proof is then as written.
\end{proof}

The following theorem generalizes Theorem \ref{simple}.

\begin{repeated-theorem}[Theorem \ref{lmp-simple}]
Suppose that $\ck$ is a locally finitely multipresentable category equipped with a strongly multicubic, continuous and accessible class $\cm$ of monomorphisms. Then $\ck_\cm$ has an simple independence relation given by pullback squares.
\end{repeated-theorem}
\begin{proof}
It follows from Theorem \ref{lmp-nsop} and Proposition \ref{lmp-strongly}.
\end{proof}

\begin{exams}
{
\em
We give two examples where Theorem \ref{lmp-simple} applies.
\begin{enumerate}
\item The class $\cm$ of regular monomorphisms in the category of sets equipped with $K$-valued distances that we considered in Example \ref{locally-multi-presentable-examples} is in fact strongly cubic. So by Theorem \ref{lmp-simple} the pullback squares form a simple independence relation.

\item Fix some field $K$ and let $\Bil_K$ be the category of bilinear spaces over $K$ with injective linear maps that respect the bilinear form (if the reader wishes they can further restrict to symmetric or alternating bilinear spaces). In \cite[Theorem 1.1]{K2} it is established that linear independence yields a simple independence relation on $\Bil_K$. We can reproduce this result by applying Theorem \ref{lmp-simple} to the class of all morphisms in $\Bil_K$, which is straightforwardly verified to be strongly multicubic.
\end{enumerate}
}
\end{exams}

\section{The existence axiom}
\label{sec:existence-axiom}
In \cite[Section 6]{K1} various notions of independence are defined. Most important to us are the notions of long dividing, isi-dividing and isi-forking. The negations of these notions then yield independence relations $\nf^\ld$, $\nf^\isid$ and $\nf^\isif$ respectively. The goal of this section is to prove the following.
\begin{theo}\label{existence-axiom}
Let $\ck$ be an AECat with AP. Suppose that $\nf$ is a weakly stable independence relation on $\ck$. Then $\ck$ satisfies the existence axiom for $\nf^\isif$. That is, any commuting square
\[
\xymatrix@=1pc{
    A \ar[r] & M \\
    C \ar[u] \ar[r] & B \ar[u]
}
\]
of morphisms in $\cm$ where $C \to B$ is an isomorphism is $\nf^\isif$-independent.
\end{theo}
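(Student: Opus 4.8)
The plan is to reduce the statement to a comparison between $\nf$ and non-isi-forking, and then to carry out that comparison by a Morley-sequence argument powered by the uniqueness of $\nf$. The first observation is that the hypothesis that $C \to B$ is an isomorphism already forces the square to be $\nf$-independent: this is exactly the fact recalled just before the definition of $\ck^2$, namely \cite[Lemma 3.12]{LRV}, that for an independence relation with existence any commuting square one of whose legs is an isomorphism lies in the relation, and $\nf$, being weakly stable, has existence. Consequently it suffices to prove the comparison statement that \emph{every $\nf$-independent square is $\nf^\isif$-independent}; applying this to the square at hand then completes the proof. I would thus take as the genuine goal: if $A \nf_C^M B$ then $\gtp(A/B)$ does not isi-fork over $C$, i.e.\ a weakly stable independence relation lies below non-isi-forking.

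To prove this comparison I would argue by contraposition, unwinding the definitions of isi-dividing and isi-forking from \cite[Section 6]{K1}. Suppose $\gtp(A/B)$ isi-forks over $C$. First I would perform the forking-to-dividing reduction: an isi-forking Galois type is controlled by an isi-dividing type over a suitable extension of the parameters, so it is enough to derive a contradiction from an isi-dividing witness, that is, a $C$-isi-sequence $(B_i)_i$ (realizing $\gtp(B/C)$, isomorphism-invariant and indiscernible over $C$, with $B_0 = B$) along which the copies $p_i$ of the type are inconsistent. The crux is that the uniqueness property of $\nf$ --- which is stationarity in the subobject language of Remark \ref{subobj-vs-commuting-squares} --- together with symmetry, transitivity and base monotonicity, allows me to amalgamate these copies. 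Starting from $A \nf_C^M B$ and extending step by step along the sequence, at each successor stage the $\nf$-independent completion over the next $B_i$ is forced by uniqueness to agree with the realization already built, and at limit stages I pass to the colimit using that the AECat has directed colimits. By induction this yields a single object realizing all the $p_i$ simultaneously, contradicting their inconsistency; hence $\gtp(A/B)$ cannot isi-fork over $C$.

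The main obstacle is precisely this coherent amalgamation along the isi-sequence. Two points require care. First, the forking-to-dividing reduction must be phrased in the Galois-type setting rather than with formulas, so that ``controlled by a dividing type'' becomes a statement about extensions of Galois types and inconsistency in the AECat. Second, and more seriously, propagating stationarity along a long, possibly limit-length, isi-sequence demands an inductive construction whose limit steps rest on the directed-colimit behaviour of $\ck$ and on the isomorphism-invariance built into ``isi''; the delicate part is keeping the realizations mutually compatible so that they assemble into \emph{one} realization rather than merely pairwise consistent witnesses. I expect the isomorphism-invariance and indiscernibility clauses of the isi-sequence to be exactly what keeps the inductive hypothesis maintainable, with uniqueness of $\nf$ supplying the required compatibility at each successor step.
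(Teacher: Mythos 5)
Your opening reduction is exactly the paper's: by \cite[Lemma 3.12]{LRV} a commuting square whose bottom leg is an isomorphism is automatically $\nf$-independent (existence suffices for this), so the real content is the comparison statement that every $\nf$-independent square is $\nf^\isif$-independent, with uniqueness (stationarity) of $\nf$ as the engine. The paper reaches the same comparison, but packages it through the simplified notions $\nf^\sld$ and $\nf^\slf$ (Definition \ref{dividing-notions} and Lemma \ref{sld-slf-facts}) and a direct argument, rather than by contraposition from an isi-dividing witness; that part of the difference is largely presentational.

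The genuine gap is in your transfinite induction along the isi-sequence. The inductive hypothesis you need at stage $\alpha$ is an independence $A \nf_C M_\alpha$ over the model accumulated so far, so that monotonicity yields $A \nf_C B_\alpha$ and uniqueness then pins the type over $B_\alpha$ to $p_\alpha$. At a limit stage $\delta$ you must pass from $A \nf_C M_\alpha$ for all $\alpha<\delta$ to $A \nf_C M_\delta$ where $M_\delta = \colim_{\alpha<\delta} M_\alpha$. That is precisely the union property, which a weakly stable independence relation is \emph{not} assumed to satisfy --- it is one of the two properties (together with accessibility) that ``weakly'' drops, and directed colimits in the AECat alone do not supply it. The repair, which is what Proposition \ref{weakly-stable-implies-sld} of the paper does, is to observe that no induction is needed: in the (isi-/long) dividing configuration all the copies $b_i \colon B \to N$ already live in a single object $N$, so one application of existence to $M \xleftarrow{b} B \xrightarrow{fb} N$ followed by transitivity produces a single $a'\colon A \to N'$ with $A$ independent from all of $N$ over $C$; monotonicity then gives $A \nf_C B_i$ for every $i$ simultaneously, and uniqueness identifies $\gtp(a', gb_i, gfc; N')$ with $\gtp(a,b,c;M)$ for all $i$ at once. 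Your step-by-step construction either secretly re-runs this one-shot argument at every stage (making the induction redundant) or leans on a continuity property of $\nf$ that the hypotheses do not provide; as written, the limit step does not go through.
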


\begin{rem}\label{existence-terminology-clash}
{\em
There is a slight clash in terminology. In \cite{K1} the existence property for an independence relation $\nf$ means that any square where the bottom morphism is an isomorphism, is $\nf$-independent. So the existence axiom for $\nf^\isif$ is then the assumption that $\nf^\isif$ has the existence property in this sense, see Theorem \ref{existence-axiom}.

In this paper we use the terminology from \cite{LRV}, where the existence property refers to being able to complete any span to an independent square. This is sometimes called ``full existence'' and is of course stronger, see \cite[Lemma 3.12]{LRV}.
}
\end{rem}

\begin{coro}\label{existence-axiom-presentable}
Let $\ck$ be a locally finitely presentable category and suppose that $\cm$ is a coherently amalgamable, continuous and accessible class of monomorphisms. Then $\ck_\cm$ satisfies the existence axiom for isi-forking.
\end{coro}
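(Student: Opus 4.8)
The plan is to deduce Corollary~\ref{existence-axiom-presentable} from Theorem~\ref{existence-axiom} by exhibiting a weakly stable independence relation on $\ck_\cm$ and verifying that $\ck_\cm$ is an AECat with AP, so that the hypotheses of Theorem~\ref{existence-axiom} are met. The candidate weakly stable independence relation is the one furnished by Proposition~\ref{weakly}: since $\cm$ is coherently amalgamable, the $\cm$-effective squares form a weakly stable independence relation on $\ck_\cm$. This is exactly the input that Theorem~\ref{existence-axiom} consumes, so the main work is to check that the ambient category $\ck_\cm$ has the structural properties required by that theorem.

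First I would verify that $\ck_\cm$ is an AECat with AP. By Remark~\ref{aec}(2), the hypotheses that $\cm$ is coherently amalgamable, continuous and accessible in the locally finitely presentable category $\ck$ give precisely that $\ck_\cm$ is an AECat with AP (indeed, an AEC with AP). Concretely: accessibility of $\cm$ (A6) makes $\ck_\cm$ accessible; continuity (A5) gives directed colimits preserved by the inclusion into $\ck$; all morphisms of $\ck_\cm$ are monomorphisms because $\cm$ is a class of monomorphisms; and coherent amalgamability yields AP as noted just after the definition of $\ck_\cm$. This places us squarely in the setting of Theorem~\ref{existence-axiom}.

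With those two ingredients in hand, the argument is a direct appeal: Theorem~\ref{existence-axiom} applied to the weakly stable independence relation of Proposition~\ref{weakly} tells us that $\ck_\cm$ satisfies the existence axiom for $\nf^\isif$, which is by definition the existence axiom for isi-forking. The only subtlety worth flagging is a bookkeeping one rather than a genuine obstacle: one must be sure that the notion of ``existence axiom for isi-forking'' invoked in Theorem~\ref{existence-axiom} (the $\nf^\isif$-independence of every square whose base morphism $C \to B$ is an isomorphism) is literally the technical condition needed for the canonicity of NSOP$_1$-like independence in Theorem~\ref{canonicity}(2). Given the terminology discussion in Remark~\ref{existence-terminology-clash}, this identification is immediate.

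The main obstacle, such as it is, lies entirely upstream in Theorem~\ref{existence-axiom} itself rather than in the corollary; the corollary is essentially an instantiation. The one place where care is needed is confirming that Proposition~\ref{weakly} applies under \emph{exactly} the stated hypotheses---coherent amalgamability suffices and we do not secretly need $\cm$ to be squared or cubic---and that no additional local-presentability assumption beyond what Theorem~\ref{existence-axiom} (which only asks for an AECat with AP) requires is being smuggled in. Both checks are routine given the cited results, so the proof reduces to: apply Proposition~\ref{weakly} to get a weakly stable independence relation, invoke Remark~\ref{aec}(2) to see $\ck_\cm$ is an AECat with AP, and conclude by Theorem~\ref{existence-axiom}.
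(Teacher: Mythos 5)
Your proposal is correct and follows exactly the paper's own argument: invoke Remark \ref{aec}(2) to see that $\ck_\cm$ is an AECat with AP, use Proposition \ref{weakly} to produce a weakly stable independence relation from the $\cm$-effective squares, and conclude by Theorem \ref{existence-axiom}. The additional bookkeeping checks you flag are sound but not needed beyond what the cited results already provide.
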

\begin{proof}
By Remark \ref{aec} $\ck_\cm$ is an AECat (even an AEC) with AP. By Proposition \ref{weakly} $\ck_\cm$ has a weakly stable independence relation. So we can apply Theorem \ref{existence-axiom}.
\end{proof}
\begin{repeated-theorem}[Corollary \ref{canonicity-in-main-thms}]
The independence relations in Theorems \ref{nsop}, \ref{simple} and \ref{lmp-simple} are canonical in the following sense. Given a category $\ck$ and class of monomorphisms $\cm$ satisfying the conditions in one of these theorems, then any NSOP$_1$-like independence relation on $\ck_\cm$ is given by pullback squares.
\end{repeated-theorem}
\begin{proof}
We apply Theorem \ref{canonicity} in each case. For Theorems \ref{simple} and \ref{lmp-simple} the application is immediate. For Theorem \ref{nsop} we use Corollary \ref{existence-axiom-presentable} to see that we have the existence axiom for isi-forking.
\end{proof}
The only main result where we have not established canonicity for the independence relation is Theorem \ref{lmp-nsop}. This is because we need the existence axiom for isi-forking there, which cannot be established in the same way as we did for the locally finitely presentable case (i.e.\ by constructing a weakly stable independence relation and applying Theorem \ref{existence-axiom}), as the following example shows.
\begin{exam}
{
\em
We give an example of a locally finitely multipresentable category that cannot have a weakly stable independence relation and whose morphisms are all regular monomorphisms, which in turn form a strongly multicubic, continuous and accessible class of monomorphisms (so in particular: coherently multiamalgamable). We note that this also means that Theorem \ref{lmp-simple} applies, so we have a simple independence relation and hence the existence axiom for isi-forking, and so this is also an example of the independence between these properties and having a weakly stable independence relation.

Let $\TwoGraph$ be the category of two-graphs with graph embeddings. We recall that a \emph{two-graph} consists of a set of vertices $X$ together with a 3-ary edge relation $E \subseteq [X]^3$ (so edges are undirected and do not contain duplicate vertices), such that between any four vertices there is an even number of edges. Then $\TwoGraph$ is a locally finitely multipresentable category because it can be axiomatized by a finitary disjunctive theory \cite[Exercise 5f]{AR} (add a relation symbol for the negation of the edge relation and enumerate all two-graphs on four elements). Furthermore, it has AP by \cite[Propositions 3.2 and 3.3]{MeirPapa}.

We claim that $\TwoGraph$ cannot have a weakly stable independence relation. Suppose that $\nf$ is weakly stable. By \cite[Fact 2.8 and Propositions 3.2 and 3.3]{MeirPapa} there is, up to isomorphism, a unique countable two-graph $G$. We apply existence for $\nf$ to the span $\{d\} \supseteq \emptyset \subseteq G$, where $\{d\}$ is two-graph with one vertex, to find a two-graph $H$ containing $d$ and $G$ and such that $d \nf^H_\emptyset G$. Pick some distinct $a$ and $b$ in $G$. We consider two cases. In the first case we assume that there is no edge between $a,b,d$. By uniqueness of $G$ and amalgamation we can find $c \in G$ distinct from $a$ and $b$ such that there is an edge between $a,b,c$. Let $f: \{a, b\} \to G$ be the embedding $f(a) = a$ and $f(b) = c$ and consider the following diagram of solid arrows (unnamed arrows are inclusions)
\[
\xymatrix@=2pc{
  & H \ar@{-->}[r]^g & H' \\
  \{d\} \ar[rr] \ar[ur] & & H \ar@{-->}[u]_h \\
  \emptyset \ar[u] \ar[r] & \{a,b\} \ar[uu] \ar[ur]_f &
}
\]
Both squares consisting of solid arrows are $\nf$-independent by monotonicity. So by uniqueness for $\nf$ we find the dashed arrows making everything commute. Since there is no edge between $a, b, d$ there is no edge between $g(a), g(b), g(d)$. By commutativity of the diagram we have $(g(a), g(b), g(d)) = (hf(a), hf(b), h(d)) = (h(a), h(c), h(d))$. We thus see that there is no edge between $a, c, d$. Using an analogous argument we also see that there is no edge between $b, c, d$. We thus conclude that in the four vertices $a, b, c, d$ the only edge is between $a, b, c$, which is not allowed in a two-graph. The second case where there is an edge between $a, b, d$ is similar, we now just pick $c$ such that there is no edge between $a, b, c$. Both cases lead to a contradiction, so $\nf$ cannot exist.

The above example is essentially a version of the model-theoretic example that types over the empty set in theory of two-graphs do not extend to global invariant types \cite{C}. It is also well-known that the first-order theory of the two-graph is simple. In fact, taking $\cm$ to be the class of all morphisms in $\TwoGraph$ (which are all regular monomorphisms) it is easy to check that $\cm$ is strongly multisquared and by \cite[Remark 7.4.4]{Marimon} we also have that it is multicubic. Therefore, pullback squares form a simple independence relation by Theorem \ref{lmp-simple}.
}
\end{exam}

The remainder of this section is dedicated to the proof of Theorem \ref{existence-axiom}, so in the remainder of this section we work in an AECat $\ck$ with AP.

The following is simplified definition of Galois types in AECats \cite[Definition 3.3]{K}.
\begin{defi}\label{galois-type}
{\em
Let $A \leftarrow C \to B$ be a span of morphisms. Suppose that we have two commuting squares
\[
\vcenter{\vbox{
\xymatrix@=1pc{
    A \ar[r]^a & M \\
    C \ar[u] \ar[r] & B \ar[u]_b
}
}}
\quad
\text{and}
\quad
\vcenter{\vbox{
\xymatrix@=1pc{
    A \ar[r]^{a'} & M' \\
    C \ar[u] \ar[r] & B \ar[u]_{b'}
}
}}
\]
where we label the morphisms $C \to M$ and $C \to M'$ with $c$ and $c'$ respectively. Then we say that these squares have the same \emph{Galois type}, and write $\gtp(a,b,c; M) = \gtp(a', b', c'; M')$, if there is a cospan $M \to N \leftarrow M'$ such that the resulting diagram commutes
\[
\xymatrix@=1pc{
    & M \ar[r] & N \\
    A \ar[ru]^(.3){a} \ar[rr]_(.3){a'} & & M' \ar[u] \\
    C \ar[u] \ar[r] & B \ar[uu]_(.3){b}  \ar[ur]_(.3){b'} & \\
}
\]
}
\end{defi}
Note that having the same Galois type is exactly the relation $\sim$ from \cite[Definition 3.2]{LRV}. This is an equivalence relation, where transitivity follows from the assumption that $\ck$ has AP.

The definitions of $\nf^\ld$, $\nf^\isid$ and $\nf^\isif$ are quite technical. For our purposes we can work with a simplified definition.
\begin{defi}\label{dividing-notions}
{\em
Let
\[
\xymatrix@=1pc{
    A \ar[r]^a & M \\
    C \ar[u]^{c_a} \ar[r]_{c_b} & B \ar[u]_b
}
\]
be a commuting square, where the morphism $C \to M$ is labelled $c$.
\begin{enumerate}
\item We say that the square \emph{does not simplified long divide} and call it $\nf^\sld$-independent if the following holds. There is a proper class of regular cardinals $\lambda$ such that given a morphism $f: M \to N$ and $\lambda$ many morphisms $(b_i: B \to N)_{i < \lambda}$, such that $fc = b_i c_b$ for each $i < \lambda$, we can find a cospan $N \xrightarrow{g} N' \xleftarrow{a'} A$ with $\gtp(a', g b_i, gfc; N') = \gtp(a, b, c; M)$ for all $i < \lambda$.
\item We say that the square \emph{does not simplified long fork} and call it $\nf^\slf$-independent if the following holds. For every $f: M \to N$ and any two sets of morphisms $\{a_j: A \to N\}_{j \in J}$ and $\{d_j: D_j \to N\}_{j \in J}$ such that $fc$ factors through each $d_j$, we have that (i) implies (ii) below.
\begin{itemize}
\item[(i)] For any cospan $N \xrightarrow{g} N' \xleftarrow{a'} A$ with $\gtp(a', g f b, gfc; N') = \gtp(a, b, c; M)$, there is $j \in J$ such that $\gtp(a', g d_j, gfc; N') = \gtp(a_j, d_j, f c; N)$.
\item[(ii)] There is some $j \in J$ such that
\[
\xymatrix@=1pc{
    A \ar[r]^{a_j} & N \\
    C \ar[u]^{c_a} \ar[r] & D_j \ar[u]_{d_j}
}
\]
is $\nf^\sld$-independent.
\end{itemize}
\end{enumerate}
}
\end{defi}
\begin{lemma}\label{sld-slf-facts}
We have that $\nf^\sld$-independence implies $\nf^\ld$-independence, which in turn implies $\nf^\isid$-independence. It follows that $\nf^\slf$-independence implies $\nf^\isif$-independence.
\end{lemma}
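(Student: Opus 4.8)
The plan is to establish the two dividing implications first and then read off the forking implication from them. Throughout I would compare the genuine notions $\nf^\ld$, $\nf^\isid$ and $\nf^\isif$ of \cite[Section 6]{K1} against the simplified notions of Definition \ref{dividing-notions}, using the single guiding observation that each simplified notion imposes its realization (respectively covering) conclusion on a \emph{larger} supply of configurations than the genuine notion does. Consequently simplified-independence is the stronger property and specializes to genuine independence.

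First I would treat $\nf^\sld \Rightarrow \nf^\ld$. Unfolding the definition of long dividing in \cite{K1}, a failure of $\nf^\ld$-independence is witnessed by a sequence $(b_i)_{i<\lambda}$ of a prescribed shape over the base; in particular each $b_i$ agrees with $b$ on $C$, so $f c = b_i c_b$. A square that is $\nf^\sld$-independent provides, for cofinally many $\lambda$ and for \emph{every} family $(b_i)_{i<\lambda}$ subject only to $f c = b_i c_b$, a cospan $N \xrightarrow{g} N' \xleftarrow{a'} A$ with $\gtp(a', g b_i, g f c; N') = \gtp(a,b,c;M)$ for all $i$. Applying this to the particular families arising in long dividing shows the square is $\nf^\ld$-independent. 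For $\nf^\ld \Rightarrow \nf^\isid$ I would appeal to \cite{K1}: an isi-dividing witness is an invariant Skolem-indiscernible sequence, hence one of the sequences already quantified over by long dividing, so isi-dividing implies long dividing and the contrapositive yields $\nf^\ld$-independence $\Rightarrow \nf^\isid$-independence.

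The forking implication I would derive formally. Both $\nf^\slf$ and $\nf^\isif$ arise from their dividing notions by the same forking-from-dividing pattern recorded in Definition \ref{dividing-notions}(2): the square is independent precisely when, for every covering family $\{(a_j,d_j)\}_{j\in J}$ through which $f c$ factors and which satisfies the purely type-theoretic hypothesis (i), at least one member yields a non-dividing square in sense (ii). Since the simplified definition asks this of a broad class of families, it applies in particular to the families relevant to genuine isi-forking; and for such a family $\nf^\slf$-independence produces some $j$ with $(a_j,d_j)$ being $\nf^\sld$-independent, which by the first part is then $\nf^\isid$-independent. That is exactly the conclusion (ii) required for $\nf^\isif$-independence, so $\nf^\slf \Rightarrow \nf^\isif$.

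The main obstacle I anticipate is the bookkeeping needed to align the simplified definitions with the technical originals in \cite{K1}: one must verify that every sequence witnessing genuine long dividing, and every covering family relevant to genuine isi-forking, is an instance of the more liberal configuration permitted in Definition \ref{dividing-notions} --- in particular that the base-agreement $f c = b_i c_b$ and the type-covering hypothesis (i) carry over unchanged. Once this is checked, each step is a specialization of a universally quantified conclusion, with the forking step amounting to monotonicity of the forking construction in its dividing input.
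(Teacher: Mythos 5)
Your proposal is correct and follows essentially the same route as the paper: the paper's own proof simply asserts that the implications amount to unwinding the definitions in \cite{K1} (the simplified notions quantify over a strictly larger class of configurations, so simplified independence specializes to the genuine one), and that the forking implication follows formally from the dividing ones. Your write-up is just a more detailed rendering of that same argument.
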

\begin{proof}
The entire point of introducing $\nf^\sld$ and $\nf^\slf$ is that we do not have to introduce the more involved definitions of the other independence notions. However, the claims in this lemma are just writing out definitions, where the claims in the first sentence are used in the second.
\end{proof}
\begin{propo}\label{weakly-stable-implies-sld}
Suppose we have a weakly stable independence relation $\nf$ and a square as in Definition \ref{dividing-notions} that is $\nf$-independent. Then that square is $\nf^\sld$-independent.
\end{propo}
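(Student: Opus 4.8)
The plan is to exploit the three properties of a weakly stable independence relation that matter here---existence, monotonicity and uniqueness---to produce a single pair $g \colon N \to N'$ and $a' \colon A \to N'$ that works uniformly for all $\lambda$ many $b_i$ at once. Write $c = a c_a = b c_b$ for the common composite $C \to M$, so that the hypothesis $b_i c_b = fc$ says exactly that each $b_i$ extends the fixed map $fc \colon C \to N$. The key idea is to amalgamate $A$ not with the individual subobjects $b_i(B)$ but with the whole of $N$ over $C$ in one step; monotonicity then delivers the required independence over each $b_i$ for free, and uniqueness pins down the Galois type. In particular $\lambda$ will play no role, so we may take the required proper class of regular cardinals to be all of them.

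First I would apply existence to the span $A \xleftarrow{c_a} C \xrightarrow{fc} N$ to obtain an $\nf$-independent square with legs $a' \colon A \to N'$ and $g \colon N \to N'$, that is $A \nf^{N'}_C N$, with $a' c_a = g f c$. This single choice of $N'$, $g$ and $a'$ is the candidate witness, and it does not depend on $i$ at all.

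Next, fixing $i < \lambda$, I would use the factorization $C \xrightarrow{c_b} B \xrightarrow{b_i} N$, which is legitimate precisely because $b_i c_b = fc$ is the base of the square from the previous step. Applying monotonicity to $A \nf^{N'}_C N$ along this factorization yields that the square
\[
\xymatrix@=1pc{
  A \ar[r]^{a'} & N' \\
  C \ar[u]^{c_a} \ar[r]_{c_b} & B \ar[u]_{g b_i}
}
\]
is again $\nf$-independent. Crucially, this square and the original square $A \nf^M_C B$ are two independent completions of the very same span $A \xleftarrow{c_a} C \xrightarrow{c_b} B$.

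Finally I would invoke uniqueness: two independent completions of one span admit a common amalgam, and that amalgam is exactly a cospan $M \to \tilde N \leftarrow N'$ identifying the images of $A$, $B$ and $C$, i.e.\ a witness that $\gtp(a', g b_i, g f c; N') = \gtp(a, b, c; M)$ (using $g b_i c_b = g f c$). Since $g$ and $a'$ were chosen once and for all in the existence step, this equality holds simultaneously for every $i < \lambda$, which is exactly the statement that the square is $\nf^\sld$-independent. The one point requiring care---and the conceptual heart of the argument---is the uniform choice of witness: treating the $b_i$ one at a time would not obviously yield a single $a'$, so it is essential to amalgamate over all of $N$ first and only afterwards cut down to each $b_i$ by monotonicity.
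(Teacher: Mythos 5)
Your proof is correct and follows essentially the same strategy as the paper: produce one uniform independent square $A \nf_C^{N'} N$, cut it down to each $b_i$ by monotonicity along the factorization $C \xrightarrow{c_b} B \xrightarrow{b_i} N$, and conclude by uniqueness that the Galois types agree. The only (immaterial) difference is that the paper obtains $A \nf_C^{N'} N$ by applying existence to $M \xleftarrow{b} B \xrightarrow{fb} N$ and then composing with the given square via transitivity, whereas you apply existence directly to the span $A \xleftarrow{c_a} C \xrightarrow{fc} N$; both yield the same single cospan $N \xrightarrow{g} N' \xleftarrow{a'} A$ working for all $i$ simultaneously.
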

\begin{proof}
Let $f: M \to N$ and $(b_i: B \to N)_{i < \lambda}$ be as in Definition \ref{dividing-notions}(1). Apply existence for $\nf$ to $M \xleftarrow{b} B \xrightarrow{fb} N$ to find $M \xrightarrow{h} N' \xleftarrow{g} N$, such that the resulting square in $\nf$-independent. By transitivity for $\nf$ the outer rectangle below is $\nf$-independent:
\[
\xymatrix@=1pc{
    A \ar[r]^{a} & M \ar[r]^{h} & N' \\
    C \ar[u]^{c_a} \ar[r]_{c_b} & B \ar[u]_{b} \ar[r]_{fb} & N \ar[u]_{g}
}
\]
Set $a' = ha$, we verify that we have constructed the required cospan $N \xrightarrow{g} N' \xleftarrow{a'} A$ from Definition \ref{dividing-notions}(1). So let $i < \lambda$. The bottom arrow in the above diagram is $f b c_b = fc = b_i c_b$. So by monotonicity for $\nf$ the following square is $\nf$-independent:
\[
\xymatrix@=1pc{
    A \ar[r]^{a'} & N' \\
    C \ar[u]^{c_a} \ar[r]_{c_b} & B \ar[u]_{g b_i}
}
\]
Uniqueness for $\nf$ is then precisely saying that $\gtp(a', gb_i, gfc; N') = \gtp(a, b, c; M)$, which is what we needed to show.
\end{proof}
\begin{propo}\label{weakly-stable-implies-slf}
Suppose we have a weakly stable independence relation $\nf$ and a square as in Definition \ref{dividing-notions} that is $\nf$-independent. Then that square is $\nf^\slf$-independent.
\end{propo}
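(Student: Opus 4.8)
The plan is to reduce condition~(ii) to exhibiting a single $\nf$-independent square: by Proposition~\ref{weakly-stable-implies-sld} every $\nf$-independent square is $\nf^\sld$-independent, so it is enough, given $f\colon M\to N$ together with families $\{a_j\colon A\to N\}_{j\in J}$ and $\{d_j\colon D_j\to N\}_{j\in J}$ with $fc$ factoring as $fc=d_j c_{d_j}$ through each $d_j$, and assuming~(i), to produce some $j\in J$ for which the square in~(ii), over the span $A\xleftarrow{c_a}C\xrightarrow{c_{d_j}}D_j$ with legs $a_j$ and $d_j$, is $\nf$-independent.

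First I would reuse verbatim the cospan construction from the proof of Proposition~\ref{weakly-stable-implies-sld}: applying existence for $\nf$ to the span $M\xleftarrow{b}B\xrightarrow{fb}N$ yields $M\xrightarrow{h}N'\xleftarrow{g}N$ with the resulting square $\nf$-independent, and transitivity makes the outer rectangle
\[
\xymatrix@=1pc{
    A \ar[r]^{a} & M \ar[r]^{h} & N' \\
    C \ar[u]^{c_a} \ar[r]_{c_b} & B \ar[u]_{b} \ar[r]_{fb} & N \ar[u]_{g}
}
\]
$\nf$-independent, where I set $a'=ha$ and note that its base edge is $fb\,c_b=fc$. Writing out $hb=gfb$ and $hc=gfc$, the cospan $M\xrightarrow{h}N'\xleftarrow{\id}N'$ witnesses directly (no appeal to uniqueness is needed here) that $\gtp(a',gfb,gfc;N')=\gtp(a,b,c;M)$. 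Hence condition~(i) applies to the cospan $N\xrightarrow{g}N'\xleftarrow{a'}A$ and returns some $j\in J$ with
\[
\gtp(a',g d_j,gfc;N')=\gtp(a_j,d_j,fc;N).
\]

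To finish, I would use that $fc$ factors through $d_j$ to apply monotonicity to the $\nf$-independent rectangle $A\nf_C^{N'}N$ (base edge $fc$), obtaining $A\nf_C^{N'}D_j$; that is, the square over $A\xleftarrow{c_a}C\xrightarrow{c_{d_j}}D_j$ with legs $a'$ and $g d_j$ is $\nf$-independent, and its Galois type is precisely $\gtp(a',g d_j,gfc;N')$. By the displayed equality this equals $\gtp(a_j,d_j,fc;N)$, the Galois type of the target square in~(ii); since $\nf$ has existence it is closed under $\sim$, so that target square is $\nf$-independent too, and Proposition~\ref{weakly-stable-implies-sld} upgrades this to $\nf^\sld$-independence, establishing~(ii). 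I expect the main difficulty to be purely bookkeeping rather than conceptual: one must keep straight which span each Galois type lives over and verify the compatibility identities (in particular $a'c_a=gfc$ and $a_jc_a=fc$, the latter forced by well-formedness of the right-hand type in~(i)), and recognize that hypothesis~(i) is engineered exactly so as to transport the $\nf$-independence produced over $N'$ by monotonicity back, along $\sim$, to the square living over $N$.
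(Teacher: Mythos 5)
Your proposal is correct and follows essentially the same route as the paper's proof: reuse the cospan from Proposition \ref{weakly-stable-implies-sld} to get the $\nf$-independent outer rectangle with $a'=ha$, observe $\gtp(a',gfb,gfc;N')=\gtp(a,b,c;M)$ by construction, invoke (i) to find $j$, apply monotonicity along the factorization of $fc$ through $d_j$, and transport independence back along $\sim$ before upgrading via Proposition \ref{weakly-stable-implies-sld}. The only (harmless) difference is that you make explicit the closure of $\nf$ under $\sim$, which the paper leaves implicit at the corresponding step.
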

\begin{proof}
Let $f: M \to N$ together with $\{a_j: A \to N\}_{j \in J}$ and $\{d_j: D_j \to N\}_{j \in J}$ be as in Definition \ref{dividing-notions}(2) and assume that (i) there holds for these sets. Exactly like in the proof of Proposition \ref{weakly-stable-implies-sld} we obtain the rectangle diagram where the outer rectangle is $\nf$-independent, and like there we set $a' = ha$. Then by construction $\gtp(a', gfb, gfc; N') = \gtp(a', hb, hc; N') = \gtp(a, b, c; M)$, so by our assumption (i) there is $j \in J$ such that $\gtp(a', gd_j, gfc; N') = \gtp(a_j, d_j, fc; N)$. As the bottom arrow in rectangle diagram is $fc$, which factors through $d_j$, we get by monotonicity of $\nf$ that
\[
\xymatrix@=1pc{
    A \ar[r]^{a'} & N' \\
    C \ar[u]^{c_a} \ar[r] & D_j \ar[u]_{g d_j}
}
\]
is $\nf$-independent. Hence by the equality of Galois types that followed from (i) we get that
\[
\xymatrix@=1pc{
    A \ar[r]^{a} & N \\
    C \ar[u]^{c_a} \ar[r] & D_j \ar[u]_{d_j}
}
\]
is $\nf$-independent, and thus $\nf^\sld$-independent by Proposition \ref{weakly-stable-implies-sld}.
\end{proof}
\begin{proof}[Proof of Theorem \ref{existence-axiom}]
Any commuting square where the bottom is an isomorphism is $\nf$-independent. So by Proposition \ref{weakly-stable-implies-slf} that square is $\nf^\slf$-independent, and hence it is $\nf^\isif$-independent by Lemma \ref{sld-slf-facts}.
\end{proof}

\section*{Acknowledgements}
We thank the anonymous referee for their feedback, which helped improve the presentation of this paper.

\end{document}